\numberwithin{equation}{section}
\renewcommand{\epsilon}{\varepsilon}
\newcommand{\be}{\begin{equation}}
\newcommand{\ee}{\end{equation}}
\newcommand{\no}{\nonumber}
\newcommand{\N}{\mathbb{N}}
\newcommand{\Q}{\mathbb{Q}}
\newcommand{\R}{\mathbb{R}}
\newcommand{\T}{\mathbb{T}}
\newcommand{\Z}{\mathbb{Z}}
\newcommand{\cE}{{\mathcal E}}
\newcommand{\cF}{{\mathcal F}}
\newcommand{\supp}{{\ensuremath{\mathrm{supp}}}}
\renewcommand{\det}{\mathop{\mathrm{det}}}
\newtheorem{theorem}{Theorem}[section]
\newtheorem{lemma}[theorem]{Lemma}
\newtheorem{corollary}[theorem]{Corollary}
\newtheorem{remark}[theorem]{Remark}
\newtheorem{note}[theorem]{Note}
\begin{document}
\title[The existence of bound states in a system of three
particles... ]{The existence of bound states in a system of three
particles in an optical lattice}

\author{Saidakhmat N. Lakaev,\,\,Shukhrat  S.\ Lakaev}



\subjclass{Primary: 81Q10, Secondary: 35P20, 47N50}
\keywords{Schr\"{o}dinger operator, three-particle, Hamiltonian,
zero-range, bound state, eigenvalue, boson, fermion, lattice.}

\begin{abstract}
We consider the hamiltonian $\mathrm{H}_{\mu},\mu\in \R$ of a system
of three-particles (two identical fermions and one different
particle) moving on the lattice ${\Z}^d ,\, d=1,2 $ interacting
through repulsive $(\mu>0)$ or attractive $(\mu<0)$ zero-range
pairwise potential $\mu V$. We prove for any $\mu\ne0$ the existence
of bound state of the discrete three-particle Schr\"odinger operator
$H_{\mu}(K),\,K\in \T^d$ being the three-particle quasi-momentum,
associated to the hamiltonian $\mathrm{H}_{\mu}$.
\end{abstract}

\maketitle
\section{Introduction}
The main goal of the paper is to prove the existence of bound states
of the three-particle Schr\"odinger operator $H_\mu(K),\,K\in \T^d$
associated to a system of three particles (two identical fermions
and one different particle) for all non-zero interactions $\mu\in
\R$.


Throughout physics, stable composite objects are usually formed by
way of attractive forces, which allow the constituents to lower
their energy by binding together. Repulsive forces separate
particles in free space. However, in structured environment such as
a periodic potential and in the absence of dissipation, stable
composite objects can exist even for repulsive interactions that
arise from the lattice band structure \cite{Nature}.

The Bose-Hubbard model which is used to describe the repulsive pairs
is the theoretical basis for applications. The work \cite{Nature}
exemplifies the important correspondence between the Bose-Hubbard
model \cite{ Bloch,Jaksch_Zoller} and atoms in optical lattices, and
helps pave the way for many more interesting developments and
applications \cite{Thalhammer}. Stable repulsively bound objects
should be viewed as a general phenomenon and their existence will be
ubiquitous in cold atoms lattice physics. They give rise also to new
potential composites with fermions \cite{Hof_Zoller} or Bose-Fermi
mixtures \cite{Lewenstein}, and can be formed in an analogous manner
with more than two particles .

Cold atoms loaded in an optical lattice provide a realization of a
quantum lattice gas. The periodicity of the potential gives rise to
a band structure for the dynamics of the atoms.

The dynamics of the ultracold atoms loaded in the lower band is well
described by the Bose-Hubbard hamiltonian.

In the continuum case due to rotational invariance the hamiltonian
separates in a free hamiltonian for the center of mass
$H_{\mathrm{free}}$ and in a hamiltonian $H_{\mathrm{rel}}$ for the
relative motion. Bound states are eigenstates of $H_{\mathrm{rel}}
$.

The fundamental difference between the  discrete and continuous
multiparticle Schr\"odin\-ger operators is that in the discrete case
the kinetic  energy operator is not rotationally invariant.

One can rather resort to a Floquet-Bloch decomposition. The
three-particle Hilbert space $ \mathcal{H} \equiv \ell ^2 ({
\Z}^d)^3 $ is represented as a direct  integral associated to the
representation of the discrete group $ {\Z} ^d $ by shift operators
\begin{equation*}
L^{2}[({\Z}^d)^3] = \int_{K\in {\T}^d} \oplus
L^{2}[({\Z}^d)^2]\eta(dK),
\end{equation*}
where $\eta(dp)=\frac {d^dp}{(2\pi)^d}$ is the (normalized) Haar
measure on the torus $\T^d$. Hence the total three-body Hamiltonian
appears to be decomposable
\begin{equation*}\label{decompose}
\mathrm{H}=\int\limits_{\T^d}\oplus H(K)\eta(dK).
\end{equation*}

The fiber hamiltonian $H(K) $ depends parametrically on the {\it
quasi-momentum} $ K \in \T^d \equiv \R^d / (2 \pi {\Z}^d .$ It is
the sum of a free part and an interaction term, both bounded and the
dependence on $K$ is continuous. Bound states $\psi_{E,K}$ are
solutions of the Schr\"odinger equation
$$H(K) \psi_{E,K} = E \psi_{E,K},  \qquad \psi_{E,K} \in \ell^2
(\Z^d)^2.$$

In this work we consider the hamiltonian $\mathrm{h}_{\mu}$ of a
system of two-particles (a fermion and different particle) and the
hamiltonian $\mathrm{H}_{\mu}$ of a system of  three-particles (two
identical fermions and one different particle) on the lattice
${\Z}^d, \, d=1,2 $ interacting through zero-range potential $\mu
V$.

We denote by ${h}_{\mu}(k),k\in\T^d$ and
${H}_{\mu}(K),K\in\T^d,d=1,2$ the  Schr\"odinger operators
corresponding to the hamiltonians $\mathrm{h}_{\mu}$ and
$\mathrm{H}_{\mu}$, respectively.

First we show the existence of a unique two-particle bound state of
${h}_{\mu}(k),k\in\T^d$ with energy lying above the top of the
essential spectrum in the case of repulsive  $(\mu>0)$ interaction
and below the bottom in the attractive ($\mu<0$) case.

Second for any non-zero interaction we establish that the essential
spectrum of the three-particle operator $H_\mu(K),\,K \in \T^d$
consists of the {\it three-particle branch}, spectrum of the
non-perturbed operator $H_0(K)$ and a non-empty {\it two-particle
branches} arising due to the eigenvalues of the two-particle
operator $h_\mu(k),\,k \in \T^d.$

Third for any interacting particles $(\mu\neq0)$  we prove the
existence of three-particle bound states with energy lying above the
top resp. below the bottom of the essential spectrum for repulsive
($\mu>0$) resp. attractive ($\mu<0$) interactions.

In addition, we derive some important properties of bound states as
well as their energies: The two-particle and three-particle bound
states $\psi_{e_\mu(k),k}$ and $\psi_{E_\mu(K),K}$ in position
representation exponentially vanishes at infinity. Moreover the
bound states $\psi_{e_\mu(k),k}$ and $\psi_{E_\mu(K),K}$ and
associated energy functions $e_\mu(k)$ and $E_\mu(K)$ are
holomorphic in $k\in\T^d$ and $K\in\T^d$ respectively.

Our third result is quite surprising, because the three identical
bosonic bound states with energy lying above the top resp. below the
bottom of the essential spectrum exist also only for repulsive
($\mu>0$) resp. attractive ($\mu<0$) interactions \cite{DLKh2015}

To our knowledge analogous results have not been published yet even
for a system of three particles interacting via attractive
potentials on Euclidean space $\R^d$.

The results for a system of two identical fermions and one different
particle theoretically predict the existence of stable attractively
and repulsively bound objects for two fermionic and one bosonic
atoms. Hopefully it can be experimentally confirmed as is done for
pair of atoms with repulsive interaction in \cite{Nature}.

Notice that these results are characteristic to the Schr\"odinger
operators associated to a system of three particles moving in one-
or two-dimensional lattice $\Z^d$ and Euclidean space $\R^d$.

We remark that the results of the paper should be hold for a system
of three particles interacting via short range attractive and
repulsive potentials.

Our results allow us to formulate the following two hypothesis on
existence of bound states for a system of three particles (two
fermions and one different particle) moving on the lattice
$\Z^d,d\geq1$ and interacting via a short-range attractive or
repulsive forces:

{\it In dimension $d=1,2$ this system has finite number of
three-particle bound states, with energies lying as above the
essential spectrum, as well as below its bottom.

In dimension $d=3$ we propose the existence of infinitely many bound
states (Efimov's effect) with energies lying as above, as well as
below the essential spectrum for special repulsive or attractive
interaction.}

This paper is organized as follows. Section 1 is introduction. In
section 2 we describe the hamiltonian of the two-body and the
three-body case in the Schr\"odinger representation. It corresponds
to the Hubbard hamiltonian in the number of particles
representation. In section 3 we introduce the Floquet-Bloch (von
Neumann) decomposition and choose relative coordinates to express
the discrete Schr\"odinger operator $H_\mu(K),\; K \in \T^d $
explicitly. In section 4 we state our main results. In section 5 we
introduce \it channel operators \rm and describe the essential
spectrum of $ H_\mu(K),\;K\in \T^d $ by means discrete spectrum of $
h_\mu(k), \; k \in \T^d $. We prove the existence of bound states in
section 6.

\section{Hamiltonian of a system of three
particles (two identical fermions and one different particle) on
lattices}
\subsection{ The coordinate representation}
 Let ${\Z}^{d},d\geq1$ be the $d$-dimensional lattice. Let
$\ell^2[({\Z}^{d})^{m}],m=2,3,...$ be Hilbert space of
square-summable functions $\,\,\hat{\varphi}$ defined on the
Cartesian power of $({\Z}^{d})^{m},d=1,2$ and
$\ell^{2,a}[(\Z^{d})^{m}]\subset \ell^{2}[(\Z^{d})^{m}]$ be the
subspace of functions antisymmetric with respect to the permutation
of the first two coordinates of the particles. Let $\Delta$ be the
lattice Laplacian, i.e., the operator which describes the transport
of a particle from one site to another site:
$$ (\Delta\hat{\psi})(x)=-\sum_{\mid s\mid=1}[
\hat{\psi}(x)-\hat{\psi}(x+s)].$$

The free Hamiltonian $\hat{\mathrm{h}}_{0,\gamma}$ of a system of
two arbitrary particles ( a fermion and different particle) on the
$d$-dimensional lattice $\Z^d,d=1,2$ acts on
$\ell^{2}[(\Z^{d})^{2}]$ and is of the
 form
 \begin{equation}\no
\hat{\mathrm{h}}_{0,\gamma}={\Delta}\otimes I +I\otimes
\gamma\Delta,
\end{equation}
where $\gamma>0$ is the ratio of the masses of fermion and different
particle.

Respectively, the free Hamiltonian $\hat{\mathrm{h}}_{0}$ of a
system of two identical fermions acts on $\ell^{2,a}[(\Z^{d})^{2}]$
and is of the form
\begin{equation}\no
\hat{\mathrm{\bf h}}_{0}={\Delta}\otimes I + I\otimes\Delta.
\end{equation}
The total Hamiltonian of a system of two arbitrary particles $\hat
{\mathrm{h}}_{\mu}$ resp. identical fermions $\hat {\mathrm{\bf
h}}_{\mu}$ with zero-range pairwise interaction $\mu\ne0$ is a
bounded perturbation of the free Hamiltonian
$\hat{\mathrm{h}}_{0,\gamma}$ resp. $\hat {\mathrm{\bf h}}_{0}$ acts
on the Hilbert space $\ell^{2}[( {\Z}^d)^2]$ resp. $\ell^{2,a}[(
{\Z}^d)^2]$ and is of the form
\begin{equation}\label{two-part}
\hat{\mathrm{h}}_{\mu}\equiv\hat{\mathrm{h}}_{\mu,\gamma}
=\hat{\mathrm{h}}_{0,\gamma}+\mu\hat v \,\, \,\mbox{resp.} \,\,
\hat{\mathrm{\bf h}}_{\mu} =\hat{\mathrm{\bf h}}_{0}+\mu\hat v
\end{equation}
where
\begin{equation*}
(\hat v\hat \psi)(x_1,x_2) = \delta _{x_1 x_2}
{\hat\psi}(x_1,x_2),\,\, {\hat\psi} \in
\ell^{2}[({\Z}^d)^2],\,\,\mbox{resp.}\,\,{\hat\psi}\in
\ell^{2,a}[({\Z}^d)^2]
\end{equation*}
and $\delta _{x_1 x_2}$ is  the Kronecker delta.

The free Hamiltonian $\widehat{\mathrm{H}}_{0,\gamma}$ of a system
of three particles (two identical fermions and one different
particle) on the $d$-dimensional lattice $\Z^d$ acts in
$\ell^{2,a}[({\Z}^d)^3]$ can be represented as
\begin{equation}\label{free0} \widehat {\mathrm{H}}_{0,\gamma}=\Delta\otimes I\otimes I
+ I \otimes \Delta \otimes I + I\otimes I\otimes \gamma\Delta.
\end{equation}
The total Hamiltonian $\widehat {\mathrm{H}}_{\mu}$ of a system of
three-particles with the pairwise zero-range interaction is a
bounded perturbation of the free Hamiltonian
$\widehat{\mathrm{H}}_{0}$
\begin{equation}\label{total}
\widehat {\mathrm{H}}_{\mu}\equiv\widehat
{\mathrm{H}}_{\mu,\gamma}=\widehat{\mathrm{H}}_{0,\gamma}+ \mu(
\widehat{V}_{12}+\widehat{V}_{13}+\widehat{V}_{23}),
\end{equation}
where $\widehat{V}_{1,2}$ and  $\widehat{V}_{\alpha,3},\,\alpha=1,2$
are multiplication operators
\begin{equation*}
(\widehat{V}_{1,2}\hat{\psi})(x_1,x_2,x_3)= \delta_{x_1
x_2}\hat{\psi}(x_1,x_2,x_3), \quad \hat{\psi} \in
\ell^{2,a}[(\Z^{d})^{3}],
\end{equation*}
and
\begin{equation*}
(\widehat{V}_{\alpha,3}\hat{\psi})(x_1,x_2,x_3)= \delta_{x_{\alpha}
x_3}\hat{\psi}(x_1,x_2,x_3), \quad \hat{\psi} \in
\ell^{2,a}[(\Z^{d})^{3}].
\end{equation*}
\begin{remark}\label{coexist}It can be easily seen that the equalities
\begin{align}
&(\hat{v}_{1,2}\hat{\psi})(x_1,x_2)=0,\,\mbox{for all}\,
\hat{\psi}\in \ell^{2,a}[(\Z^{d})^{2}],\\\nonumber
&(\widehat{V}_{1,2}\hat{\psi})(x_1,x_2,x_3)=0,\,\mbox{for all}\,
\hat{\psi}\in \ell^{2,a}[(\Z^{d})^{3}]
\end{align}
hold. It means that two interacting fermions cannot coexist on the
same site of the lattice, this is Pauli's exclusion principle for
identical fermions. Further, we deal only with the hamiltonian
$\hat{\mathrm{h}}_{\mu}$.
\end{remark}

\subsection{ The momentum  representation}
Let us rewrite our operators in the momentum representation. Let
$\T^d$ be the $d$-dimensional torus (Brillouin zone)
\begin{equation*}
\mathbb{T}^{d}=(\mathbb{R}/2\pi \mathbb{Z)}^{d}\equiv \lbrack -\pi,\pi )^{d}%
\text{ },
\end{equation*}%
the Pontryagin dual group of $\mathbb{Z}^{d}$ and $\eta(dp) =\frac
{d^dp}{(2\pi)^d}$ is the (normalized) Haar measure on the torus. Let
$L^{2,a}[({\T}^{d})^{3}]\subset L^2[({\T}^{d})^{3}]$ be the subspace
of the functions antisymmetric with respect to the permutation of
the first two coordinates of particles.

Let ${ \cF}_m:L^2[({\T}^d)^m] \rightarrow \ell^2[(
{\Z}^d)]^m),\,m\in \N$ be the standard Fourier transform and
$\cF_3^{a}$ be the restriction of $\cF_3$ on the subspace
$L^{2,a}[(\T^{d})^{3}]$.

In the momentum representation the two-and three-particle
Hamiltonian $\mathrm{h}_{\mu}$ and $\mathrm{H}_{\mu}$ is given by
the bounded self-adjoint operator
$$
 \mathrm{h}_{\mu}= \cF_2^{-1}\mathrm{\hat h}_{\mu}\cF_2
$$
and
$$
\mathrm {H}_{\mu}= [\cF_3^{a}]^{-1}\mathrm {\widehat
H}_{\mu}\cF_3^{a}$$ respectively. The operator $\mathrm {h}_{\mu}$
acts in $L^{2}[(\T^{d})^{2}]$ and is of the form
\begin{equation} \label{two} \mathrm {h}_{\mu}=\mathrm {h}_{0}+\,\mu \,v,\
\end{equation}
where
\begin{equation}(\mathrm {h}_{0}\,f)(k_{\alpha},k_{3})=[\varepsilon (k_{\alpha})+\gamma
  \varepsilon (k_{3})]\,f(k_{\alpha},k_{3}).
\end{equation}
The interaction operator $v$ acts in $L^{2}[({\T}^{d})^{2}]$ as
\begin{align*}
&(v f)(k_{\alpha},k_{3})=\int_{({\T}^{3})^{2}}\delta
(k_{\alpha}+k_{3}-k_{\alpha}^{\prime }-k_{3}^{\prime
})f(k_{\alpha}^{\prime},k_{3}^{\prime})dk_{\alpha}^{\prime }dk_{3}^{\prime}\\
&=\int_{({\T}^{3})^{2}}f(k_{\alpha}+k_{3}-k_{3}^{\prime},k_3^{\prime})dk_{3}^{\prime}
,\quad
   f\in L^{2}[({\T}^{d})^{2}],\,\alpha=1,2.
\end{align*}
The function $\varepsilon$ is of the form
\begin{equation} \label{eps form}
\varepsilon (p)=2\sum\limits_{i=1}^{d}(1-\cos {p^{(i)}}),\quad
p=(p^{(1)},...,p^{(d)})\in {\T}^{d}
\end{equation}
and  $\delta (k)$ denotes the $d-$ dimensional Dirac delta-function.

The three-particle Hamiltonian $\mathrm {H}_{\mu}$ is of the form
\begin{equation} \label{Hamilt}
\mathrm {H}_{\mu}=\mathrm {H}_{0}+\,\mu \,(\mathrm {V}_{1,3}+\mathrm
{V}_{2,3}),
\end{equation}  where  non-perturbed operator $H_{0}=H_{0,\gamma}$ acts in
$L^{2,a}((\T^d)^3)$ as
\begin{equation*} \label{free0}
(\mathrm {H}_{0}\,f)(k_{1},k_{2},k_{3})=[\varepsilon
(k_{1})+\varepsilon (k_{2})+\gamma \,\varepsilon
(k_{3})]\,f(k_{1},k_{2},k_{3}),
\end{equation*}
and the interaction operator $V_{\alpha,3},\alpha=1,2$ is given by
\begin{align*}
&(\mathrm {V}_{1,3}f)(k_{1},k_{2},k_{3})\\
&={\int\limits_{({\T}^d)^3}} \delta (k_{2}-k_{2}')\, \delta (k_{1}
+k_{3}
-k_{1}'-k_{3}')f(k_{1}',k_{2}',k_{3}')\eta(dk_{1}')\eta(dk_{2}')\eta(dk_{3}')\\
&={\int\limits_{{\T}^d}}f(k_{1}',k_{2},k_{3}
+k_{1}-k_{1}')\eta(dk_{1}'), \quad f\in L^{2,a}[({\T}^d)^3].
\end{align*}
and
\begin{align*}
&(\mathrm {V}_{2,3}f)(k_{1},k_{2},k_{3})\\
&={\int\limits_{({\T}^d)^3} } \delta (k_{1} -k_{1}')\, \delta (k_{2}
+k_{3}
-k_{2}'-k_{3}')f(k_{1}',k'_2,k'_3)\eta(dk_{1}')\eta(dk_{2}')\eta(dk_{3}')\\
&={\int\limits_{{\T}^d}}f(k_{1},k_{2}',k_{2}
+k_{3}-k_{2}')\eta(dk_{2}'), \quad f\in L^{2,a}[({\T}^d)^3].
\end{align*}

\section{Decomposition of the energy operators into von Neumann direct integrals.
 Quasi-momentum and coordinate systems}

Denote by $k=k_1+k_2\in \T^d$ and $K=k_1+k_2+k_3\in \T^d$ the {\it
two-} and {\it three-particle quasi-momenta}.  Define the sets
$\mathbb{Q}_{k}$ and $\mathbb{Q}_{K}$ as follows
\begin{align*}
&\mathbb{Q}_{k}=\{(k_1,k-k_1){\in }({\T}^d)^2: k_1 \in\T^d,
k-k_1\in\T^d\}
\end{align*}
and
\begin{align*}
&\mathbb{Q}_{K}=\{(k_1,k_2,K-k_1-k_2){\in }({\T}^d)^3: k_1,k_2
\in\T^d, K-k_1-k_2\in\T^d\}.
\end{align*}
We introduce the maps
\begin{equation*}
\pi_{2}:(\T^d)^2\to \T^d,\quad \pi_2(k_1, k_2)=k_1
\end{equation*}
and
\begin{equation*}
\pi_3:(\T^d)^3\to (\T^d)^2,\quad \pi_3(k_1, k_2, k_3)=(k_1, k_2).
\end{equation*}

Denote by  $\pi_k, k\in \T^d$ and $\pi_{K}, K\in \T^d$ the
restrictions of $\pi_2$ and $\pi_3$ onto $\Q_{k}\subset (\T^d)^2$
and $\Q_{K}\subset (\T^d)^3$ respectively, i.e.,
\begin{equation*}\label{project}\pi_{k}= \pi_2\vert_{\Q_{k}}\quad
\text{and}\quad \pi_{K}=\pi_{3}\vert_{\Q_{K}}.
\end{equation*}
\begin{remark} We note that $ \Q_{k},\,\, k \in
{\T}^d $ and
 $ \Q_{K},\,\, K \in
{\T}^d $ are $d-$ and $2d-$ dimensional manifolds isomorphic to
${\T}^d$ and ${({\T}^d)^2}$ respectively: The maps  $\pi_{k},k\in
\T^d$ and $\pi_{K}, K\in \T^d$ are bijective from
$\mathbb{Q}_{k}\subset (\T^d)^2$ and $\mathbb{Q}_{K}\subset
(\T^d)^3$ onto  $\T^d$ and $(\T^d)^2$ with
\begin{equation*}
(\pi_{k})^{-1}(k_1)=(k_1,k-k_1)
\end{equation*}
and
\begin{equation*}
(\pi_{K})^{-1}(k_1,k_2)= (k_1, k_2, K-k_1-k_2).
\end{equation*}
\end{remark} Decomposing the Hilbert spaces $ L^{2}[(\T^d)^2]$ and $
L^{2,a}[({\T}^d)^3]$  into the direct integrals
\begin{equation*}\label{tensor}
L^{2}[({\T}^d)^2] = \int_{k\in {\T}^d} \oplus L^2
(\mathbb{Q}_k)\eta(dk)
\end{equation*}
and
\begin{equation*} L^{2,a}[({\T}^d)^3] = \int_{K\in {\T}^d} \oplus
L^{2,a}[\mathbb{Q}_K]\eta(dK)\end{equation*} yield the
decompositions of the Hamiltonians $\mathrm{h}_\mu$ and
$\mathrm{H}_\mu$ into the direct integrals
\begin{equation}\label{fiber2}
\mathrm{h}_{\mu}= \int\limits_{k \in \T^d}\oplus\tilde
h_{\mu}(k)\eta(dk)
\end{equation}
and
\begin{equation}\label{fiber3} \mathrm{H}_{\mu,}=\int\limits_
{K \in {\T}^d}\oplus \tilde H_{\mu}(K)\eta(dK).
\end{equation}
\subsection{The discrete Schr\"odinger operators}

The fiber operator $\tilde h_{\mu}(k),$ $k \in {\T}^d$  from the
direct integral decomposition \eqref{fiber2} acts in $L^2
(\mathbb{Q}_k)$ and is unitarily equivalent to the operator
$h_{\mu}(k),$ $k \in {\T}^d$ given by
\begin{equation}\label{two} h_{\mu}(k)=h_{\gamma,0}(k)+\mu
v, \mu\ne0.
\end{equation}
The operator $h_{0}=h_{\gamma,0}(k)$ is the multiplication operator
by the function  $\cE_{\gamma,k}(p)$:
\begin{equation*}
(h_{\gamma,0}f)(p)=\cE_{\gamma,k}f(p),\quad f \in L^{2}(\T^d),
\end{equation*}
where
\begin{equation}\label{two-part_dispersion}
\cE_{\gamma,k}(p)=
  \varepsilon(q)+\gamma\varepsilon (k-q),
\end{equation}
and
\begin{equation*}
(vf)(p)= \int\limits_{{\T}^d}f(q)d \eta (q), \quad f \in
L^{2}({\T}^d).
\end{equation*}

The fiber operator $\tilde H_{\mu}(K),$ \,$K \in {\T}^d$ from the
direct integral decomposition \eqref{fiber3} acts in $L^{2,a}
(\mathbb{Q}_K)$ and is unitarily equivalent to the operator
$H_{\mu}(K),$ $K \in {\T}^d$ given by
\begin{equation*}\label{three-particle}
H_{\mu}(K)=H_{0}(K)+\mu (V_{13}+V_{23}).
\end{equation*}
The operator $H_{0}(K)=H_{\gamma,0}(K)$ acts in the Hilbert space
$L^{2,a}[({\T}^d)^2]$ as
\begin{equation*}\label{TotalK}
(H_{\gamma,0}(K)f)(p,q)=E(K;p,q)f(p,q),
\end{equation*}
where
\begin{equation*}E(K;p,q)=\varepsilon(p) + \varepsilon (q)+\gamma \varepsilon
(K-p-q).
\end{equation*}
The perturbation operator $\mathbb{V}=V_{13}+V_{23}$ in coordinates
$(p,q)\in (\T^d)^2$ can be written in the form

\begin{align}\label{potential}
&(\mathbb{V}f)(p,q)= \int\limits_{\T^d}f(p,t)
\eta(dt)+\int\limits_{\T^d}f(t,q)\eta(dt),f\in L^{2,a} [({\T}^d)^2]
\end{align}

\section{Statement of the main results}
According to the Weyl theorem \cite{RSIV} the essential spectrum
   $\sigma_{\mathrm{ess}}(h_{\mu}(k))$ of the
operator $h_{\mu}(k)(k),\,k \in \T^d$ coincides with the spectrum $
{\sigma}(h_{0}(k)) $ of $h_{0}(k).$ More specifically, since for any
$k\in \T^d$ the function $\cE_k(p)$ is continuous in $p\in \T^d$ the
equality
$$
\sigma_{\mathrm{ess}}(h_\mu(k))= [\cE_{\min}(k) ,\,\cE_{\max}(k)]
$$ holds, where \begin{align*} &\cE_{\min}(k)\equiv\min_{p\in
\T^d}\cE_k(p)=\varepsilon(p_{\min}(k))+\gamma\varepsilon(K-p_{\min}(k)),\,p_{\min}\in
\T^d,\\
&\cE_{\max}(k)\equiv\max_{p\in\T^d}
\cE_{k}(p)=\varepsilon(p_{\max}(k))+\gamma\varepsilon(K-p_{\max}(k)),\,p_{\max}\in
\T^d.
\end{align*}

The spectrum $\sigma(H_{0}(K))$ of the
 non-perturbed operator $H_{0}(K),\,K \in \T^d$ coincides with the
segment $[\mathrm{E}_{\min}(K),\mathrm{E}_{\max}(K)]$. Since for
each $K\in\T^d$ the function $E(K;p,q)$ is continuous and symmetric
on $(\T^d)^2,\,d=1,2$ the equalities
$$
\mathrm{E}_{\min}(K)=\min_{p,q \in \T^d}
E(K;p,q)=E(K;p_{\min}(K),q_{\min}(K))$$ and
$$\mathrm{E}_{\max}(K)=\max_{p,q \in
\T^d}E(K;p,q)=E(K;p_{\max}(K),q_{\max}(K))
$$
hold, where
$(p_{\min}(K),q_{\min}(K)),\,(p_{\max}(K),q_{\max}(K))\in (\T^d)^2$.
\begin{note}
We remark that the essential spectrum
$[\cE_{\min}(k),\cE_{\max}(k)]$ strongly depends on the
quasi-momentum $k\in\T^d;$ when $k=\vec{\pi}=(\pi,...,\pi)\in \T^d$
the essential spectrum of $h_{\mu}(k)$ degenerated to the set
consisting of a unique point $\{\cE_{\min}(\vec{\pi})=
\cE_{\max}(\vec{\pi}) =2d\}$ and hence the essential spectrum of
$h_{\mu}(k)$ is not absolutely continuous for all $k\in \T^d.$
Similar arguments should be true for the spectrum of $H_0(K)$.
\end{note}

The following theorem asserts the existence of a unique eigenvalue
$e_{\mu}(k)$ of the operator $h_{\mu}(k)$, which lays above the top
$\cE_{\max}(k)$ resp. below the bottom $\cE_{\min}(k)$ of the
essential spectrum $\sigma_{\mathrm{ess}}(h_\mu (k))$ for repulsive
($\mu>0$) resp. attractive ($\mu<0$) interactions.

\begin{theorem}\label{existencetwo}Let $d=1,2$. For any $\mu\ne0$ the operator $h_{\mu}(k),k\in
\T^d$ has a unique eigenvalue $e_{\mu}(k)$, which satisfies the
relations:
$$e_{\mu}(k)>\cE_{\max}(k),\,k\in\T^d\,\, \mbox{and}\,\,
e_{\mu}(0)>e_{\mu}(k), k\in \T^d\setminus\{0\} \, \,\mbox{for}\,\,
\mu>0
$$
and
$$e_{\mu}(k)<\cE_{\min}(k),\,k\in\T^d \,\, \mbox{and}\,\, e_{\mu}(0)<e_{\mu}(k),
 k\in\T^d\setminus\{0\} \, \,\mbox{for}\,\,
\mu<0.
$$ The eigenvalue $e_{\mu}(k)$ is
holomorphic function in $k\in\T^d$ and for any $k\in\T^d$ the
associated eigenfunction $f_{\mu,e_{\mu}(k)}(p)$ is holomorphic in
$p\in{\T}^d$ and is of the form
\begin{equation*}\label{eigen}
f_{\mu,e_{\mu}(k)}(\cdot)=\frac{\mu
c(k)}{e_{\mu}(k)-\cE_{k}(\cdot)}\,\,\mbox{resp.}\,\,f_{\mu,e_{\mu}(k)}(\cdot)=\frac{\mu
c(k)}{\cE_{k}(\cdot)-e_{\mu}(k)},
\end{equation*}
where $c(k)\neq 0$ is  a normalizing constant. Moreover, the vector
valued mapping
\begin{equation*}\label{map}
f_{\mu}:\mathbb{\T}^d \rightarrow
L^2[\mathbb{\T}^d,\eta(dk);L^{2}({\T}^d)],\,k\rightarrow
f_{\mu,e_{\mu}(k)}
\end{equation*}
is holomorphic on $\mathbb{\T}^d$.
\end{theorem}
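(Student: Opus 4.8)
The plan is to exploit that the perturbation $v$ in the fiber operator $h_\mu(k)=h_{\gamma,0}(k)+\mu v$ is (up to the factor $\mu$) the rank-one orthogonal projection onto the constant function $\mathbf 1\in L^2(\T^d)$, since $(vf)(p)=\langle\mathbf 1,f\rangle$. For $z$ outside the range $[\cE_{\min}(k),\cE_{\max}(k)]$ of the multiplier $\cE_k$, the equation $h_\mu(k)f=zf$ reads $(\cE_k(p)-z)f(p)+\mu\langle\mathbf 1,f\rangle=0$, so $f(p)=\mu\langle\mathbf 1,f\rangle/(z-\cE_k(p))$. Integrating against $\eta$ and cancelling $\langle\mathbf 1,f\rangle\neq 0$ reduces the eigenvalue problem to the Fredholm (Weinstein--Aronszajn) relation
\[
D_\mu(k,z):=1-\mu\int_{\T^d}\frac{\eta(dp)}{z-\cE_k(p)}=0 .
\]
This simultaneously yields the claimed eigenfunction $f_{\mu,e_\mu(k)}(p)=\mu c(k)/(e_\mu(k)-\cE_k(p))$ (written with the opposite sign in the attractive case) and rules out embedded eigenvalues: for $z$ in the interior of the band one would need $\langle\mathbf 1,f\rangle=0$, whence $(\cE_k(p)-z)f(p)=0$ a.e.\ and $f$ is supported on the null level set $\{\cE_k=z\}$, i.e.\ $f=0$.

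Next I would settle existence and uniqueness. Fix $\mu>0$ and let $z>\cE_{\max}(k)$, where $I(z):=\int_{\T^d}\eta(dp)/(z-\cE_k(p))$ is positive, strictly decreasing, tends to $0$ as $z\to\infty$, and---crucially---tends to $+\infty$ as $z\downarrow\cE_{\max}(k)$. This divergence is exactly where $d\le 2$ enters: near the maximum $p_{\max}(k)$ one has $\cE_{\max}(k)-\cE_k(p)\sim|p-p_{\max}(k)|^2$, and $\int|p-p_{\max}|^{-2}\,dp$ diverges precisely for $d\le 2$. Hence $\mu I(z)=1$ has a unique root $e_\mu(k)>\cE_{\max}(k)$. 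For $z<\cE_{\min}(k)$ one has $D_\mu(k,z)=1+\mu\int\eta(dp)/(\cE_k(p)-z)>0$, so no root lies below; since the essential spectrum is the single interval $[\cE_{\min}(k),\cE_{\max}(k)]$, $e_\mu(k)$ is the unique eigenvalue. The attractive case $\mu<0$ is symmetric, producing the unique root below $\cE_{\min}(k)$, with eigenfunction $\mu c(k)/(\cE_k(p)-e_\mu(k))$.

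For regularity in $k$ I would apply the analytic implicit function theorem to $D_\mu(k,z)=0$. As $\varepsilon$, and hence $\cE_k(p)=\varepsilon(p)+\gamma\varepsilon(k-p)$, is a trigonometric polynomial, it extends to an entire function of the complexified $k$, and $D_\mu$ is jointly analytic on a complex neighbourhood of $\{(k,e_\mu(k)):k\in\T^d\}$, on which $e_\mu(k)$ stays off the (slightly enlarged) range of $\cE_k$. Since $\partial_z D_\mu=\mu\int\eta(dp)/(z-\cE_k(p))^2\neq 0$ at the root, $e_\mu(k)$ extends holomorphically to a uniform complex neighbourhood of $\T^d$. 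Holomorphy of $f_{\mu,e_\mu(k)}$ in $p$ is then immediate from its explicit formula, the denominator $e_\mu(k)-\cE_k(p)$ being nonvanishing on a complex strip around the real torus; and choosing $c(k)$ analytically (from the Riesz eigenprojection applied to a fixed reference vector, rather than by unit normalization) makes $k\mapsto f_{\mu,e_\mu(k)}$ a holomorphic $L^2$-valued map. Equivalently one may invoke Kato's analytic perturbation theory, the eigenvalue being isolated and simple.

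The substantive point---and the step I expect to be the main obstacle---is the strict extremality $e_\mu(0)>e_\mu(k)$ for $k\ne 0$ (repulsive), resp.\ $e_\mu(0)<e_\mu(k)$ (attractive). My plan is to compare the root-defining integrals at a common $z$ via a heat-kernel representation. For $z>\cE_{\max}(k)$,
\[
\int_{\T^d}\frac{\eta(dp)}{z-\cE_k(p)}=\int_0^\infty e^{-sz}\Big(\int_{\T^d}e^{s\cE_k(p)}\,\eta(dp)\Big)\,ds ,
\]
and since $\varepsilon$ splits over coordinates, $\cE_k(p)=\sum_i[\varepsilon_1(p^{(i)})+\gamma\varepsilon_1(k^{(i)}-p^{(i)})]$ with $\varepsilon_1(t)=2(1-\cos t)$, the inner integral factorizes as $\prod_i(a_s* b_s)(k^{(i)})$, $a_s=e^{s\varepsilon_1}$, $b_s=e^{s\gamma\varepsilon_1}$. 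A direct computation gives $(a_s* b_s)(\kappa)=e^{2s(1+\gamma)}I_0\big(2s\rho(\kappa)\big)$ with $\rho(\kappa)=\sqrt{1+2\gamma\cos\kappa+\gamma^2}$ and $I_0$ the modified Bessel function; since $I_0$ is even and strictly increasing on $[0,\infty)$ and $\rho(\kappa)$ is strictly maximized at $\kappa=0$, each factor, hence $\int e^{s\cE_k}\eta(dp)$, is strictly maximized at $k=0$ for every $s>0$. Therefore $\int\eta(dp)/(z-\cE_k(p))<\int\eta(dp)/(z-\cE_0(p))$ for $k\ne 0$ and every $z>\cE_{\max}(0)$. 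Because $\cE_{\max}(0)=4d(1+\gamma)$ is the largest band-top, both $D_\mu(0,\cdot)$ and $D_\mu(k,\cdot)$ are defined for $z\ge e_\mu(0)$, and if $e_\mu(k)\ge e_\mu(0)$ then $1/\mu=\int\eta(dp)/(e_\mu(k)-\cE_k(p))<\int\eta(dp)/(e_\mu(k)-\cE_0(p))\le\int\eta(dp)/(e_\mu(0)-\cE_0(p))=1/\mu$, a contradiction; hence $e_\mu(k)<e_\mu(0)$. The attractive case uses the companion identity with $e^{-s\cE_k}$, whose coordinatewise integral is again maximized at $k=0$, giving $e_\mu(0)<e_\mu(k)$. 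In $d=1$ the computation collapses to the closed form $e_\mu(k)=2(1+\gamma)+\sqrt{\mu^2+4(1+\gamma^2+2\gamma\cos k)}$, which displays the extremum directly and serves as a check.
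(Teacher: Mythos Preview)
Your argument is correct. Note, however, that the paper does not actually give a proof of this theorem: it simply states that it ``can be proven in the same way as Theorem~\ref{existencetwo} in \cite{LU12}''. So there is no in-paper proof to compare against beyond the implicit indication that the Fredholm-type determinant
\[
\Delta_\mu(K,k;z)=1+\mu\int_{\T^d}\frac{\eta(dq)}{E(K;k,q)-z}
\]
(used later in Section~6) is the relevant object, which is exactly your $D_\mu(k,z)$ up to sign convention.

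Your reduction to the rank-one Weinstein--Aronszajn equation, the monotonicity/divergence argument for existence and uniqueness in $d\le 2$, the exclusion of embedded and wrong-side eigenvalues, and the analytic implicit function theorem for the $k$-dependence are all standard and correct, and match what one expects the cited reference to do. The one step that goes beyond routine is the strict extremality $e_\mu(0)>e_\mu(k)$ (resp.\ $<$), and your heat-kernel factorization with the closed form $e^{2s(1+\gamma)}I_0\!\big(2s\sqrt{1+2\gamma\cos\kappa+\gamma^2}\big)$ is a clean and self-contained way to obtain it; the contradiction argument comparing $I_k(z)$ and $I_0(z)$ at $z=e_\mu(k)$ is correctly set up, using that $\cE_{\max}(0)=4d(1+\gamma)$ dominates all band tops so both Laplace representations are valid there. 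Since the paper offers no details, I cannot say whether \cite{LU12} uses this same device or a different comparison; either way your proof stands on its own, and the explicit $d=1$ formula you give is a useful consistency check.
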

Theorem \ref{existencetwo} can be proven in the same way as Theorem
\ref{existencetwo} in \cite{LU12}.

The essential spectrum of the three-particle operator $H_\mu(K),\,K
\in \T^d$ is described by the spectrum of the non perturbed operator
$H_0(K)$ and the discrete spectrum of the two-particle operator
$h_\mu(k),\,k \in \T^d.$

\begin{theorem}\label{ess} Let $d=1,2$. For
any $\mu\ne0$ the essential spectrum $\sigma_{\mathrm{ess}}(H_\mu
(K))$ of $H_\mu(K)$ satisfies the following relations
$$
\sigma_{\mathrm{ess}}(H_\mu (K))=\cup _{k\in
{\T}^d}\{e_\mu(K-k)+\varepsilon(k)\} \cup
[E_{\min}(K),E_{\max}(K)]\supset [E_{\min}(K),E_{\max}(K)],
$$
where $e_\mu(k)$ is the unique eigenvalue of the operator
$h_\mu(k),k\in \T^d$.
\end{theorem}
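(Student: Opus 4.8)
The plan is to prove a fiberwise (lattice) version of the HVZ theorem, using the channel operators announced in the introduction. I first introduce the two \emph{channel operators}
$$
H_\alpha(K)=H_0(K)+\mu V_{\alpha 3},\qquad \alpha=1,2,
$$
each obtained from $H_\mu(K)$ by keeping a single pair interaction. The exchange $U\colon f(p,q)\mapsto f(q,p)$ of the two fermionic coordinates satisfies $UH_1(K)U^{-1}=H_2(K)$, so $\sigma(H_1(K))=\sigma(H_2(K))$ and only one channel branch will survive. The goal is then the identity
$$
\sigma_{\mathrm{ess}}(H_\mu(K))=\sigma(H_1(K))\cup\sigma(H_2(K))\cup\sigma(H_0(K)),
$$
after which the right-hand side is computed explicitly.

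To evaluate a channel spectrum, I would decompose $H_1(K)$ into a direct integral over the momentum $k$ of the \emph{spectator} fermion. Since $V_{13}$ integrates over the interacting variable while leaving the spectator variable fixed, the fiber of $H_1(K)$ at spectator momentum $k$ is exactly $\varepsilon(k)\,I+h_\mu(K-k)$, where $h_\mu(K-k)$ is the two-particle fiber operator $h_\mu$ with pair quasi-momentum $K-k$. Consequently
$$
\sigma(H_1(K))=\bigcup_{k\in\T^d}\bigl(\varepsilon(k)+\sigma(h_\mu(K-k))\bigr).
$$
By Weyl's theorem and Theorem \ref{existencetwo} one has $\sigma(h_\mu(K-k))=[\cE_{\min}(K-k),\cE_{\max}(K-k)]\cup\{e_\mu(K-k)\}$ for every $k$ (here $\mu\ne0$ guarantees that the eigenvalue $e_\mu$ exists). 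Writing $k=q$ and letting $p$ range over $\T^d$, the continuous parts give $\varepsilon(q)+\varepsilon(p)+\gamma\varepsilon(K-p-q)=E(K;p,q)$, whose range over $(p,q)\in(\T^d)^2$ is precisely the connected segment $[E_{\min}(K),E_{\max}(K)]$; the eigenvalue parts give the two-particle branch $\cup_{k\in\T^d}\{e_\mu(K-k)+\varepsilon(k)\}$. Since $\sigma(H_0(K))=[E_{\min}(K),E_{\max}(K)]$ is already contained in this set, the right-hand side of the theorem is recovered, and the inclusion $\supset[E_{\min}(K),E_{\max}(K)]$ is then immediate.

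The main obstacle is the HVZ identity itself. I would establish it through a Weinberg--van Winter / Faddeev-type resolvent expansion of $(H_\mu(K)-z)^{-1}$ in terms of the free resolvent $(H_0(K)-z)^{-1}$ and the channel resolvents $(H_\alpha(K)-z)^{-1}$, $\alpha=1,2$. The decisive point is that the cross-channel terms, of the form $V_{13}(H_0(K)-z)^{-1}V_{23}$ for $z\notin[E_{\min}(K),E_{\max}(K)]$, are compact: because $V_{13}$ and $V_{23}$ integrate over two different momentum variables, a direct computation shows such a product is an integral operator on $L^{2,a}[(\T^d)^2]$ whose kernel is built from $(E(K;\cdot,\cdot)-z)^{-1}$, which is bounded and continuous off the spectrum of $H_0(K)$ and hence Hilbert--Schmidt. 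Given this compactness, Weyl's theorem on the invariance of the essential spectrum under compact perturbations of the resolvent (equivalently, the analytic Fredholm alternative applied to the Faddeev operator) yields that $\sigma_{\mathrm{ess}}(H_\mu(K))$ coincides with the union of the channel spectra. The inclusion ``$\supseteq$'' can alternatively be obtained directly by constructing Weyl singular sequences from the channel fibers, namely plane-wave packets in the spectator momentum tensored with the two-particle eigenfunction $f_{\mu,e_\mu(K-k)}$, while ``$\subseteq$'' follows from invertibility of the Faddeev operator outside the claimed set. Combining the three steps gives the assertion.
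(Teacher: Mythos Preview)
Your proposal is correct and follows essentially the same route the paper takes: the paper introduces the channel operator (it reduces to a single one by the fermionic symmetry, which is your observation $UH_1(K)U^{-1}=H_2(K)$), decomposes it as a direct integral with fibers $h_\mu(K-p)+\varepsilon(p)I$, reads off the channel spectrum from Theorem~\ref{existencetwo}, and then for the HVZ identity itself simply refers to Theorem~3.2 of \cite{ALzM04}, whose proof is precisely the Faddeev/Weinberg--van Winter compactness argument you outline. You have supplied more detail than the paper itself, but the strategy is the same.
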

Theorem \ref{ess} can be proven in the same way as Theorem \ref{ess}
in \cite{DLKh2015}.

Let $\tau^{t}_{\mathrm{ess}}(H_\mu (K))$ resp.
$\tau^{b}_{\mathrm{ess}}(H_\mu (K))$ be the top  resp. the bottom of
the essential spectrum $\sigma_{\mathrm{ess}}(H_\mu (K))$.

Our main theorem asserts that the operator $H_{\mu}(k),K\in \T^d$
has eigenvalue for all repulsive ($\mu>0$) and attractive ($\mu<0$)
forces.

\begin{theorem}\label{existencethree} Let $d=1,2$.
For all $\mu\ne0$ and $K \in \T^d$  the operator $H_\mu(K)$ has
eigenvalue lying outside of the essential spectrum
$\sigma_{\mathrm{ess}}(H_\mu (K))$. Moreover, the eigenvalue
$E_\mu(K)$ is lying above the top $\tau^{t}_{\mathrm{ess}}(H_\mu
(K))$ for repulsive ($\mu>0$) interaction and below the bottom
$\tau^{b}_{\mathrm{ess}}(H_\mu (K))$ of $\sigma_{\mathrm{ess}}(H_\mu
(K))$ for attractive ($\mu<0$).


Any eigenvalue $E_\mu(K)$ of $H_\mu(K)$ is a holomorphic function in
$K\in \T^d$. The associated eigenfunction (bound state)
$f_{\mu,E_\mu(K)}(\cdot,\cdot)\in L^{2,a}[({\T}^d)^2]$ is
holomorphic in $(p,q)\in({\T}^d)^2$ and the vector valued mapping
\begin{equation*}\label{map}
f_{\mu}:\mathbb{\T}^d \rightarrow
L^2[\mathbb{\T}^d,\eta(dK);L^{2,a}[({\T}^d)^2]],\,K\rightarrow
f_{\mu,E_\mu(K)}
\end{equation*} is also holomorphic in $K\in\mathbb{\T}^d$.
\end{theorem}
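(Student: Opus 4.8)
The plan is to apply the Birman--Schwinger principle, reduce the two-dimensional eigenvalue problem on $L^{2,a}[(\T^d)^2]$ to a spectral problem for a $d$-dimensional integral operator on $L^2(\T^d)$, and close by a threshold analysis that works precisely when $d=1,2$. Treat $\mu>0$ first (the case $\mu<0$ is symmetric). Since every $f\in L^{2,a}$ obeys $f(p,q)=-f(q,p)$, the operator $(Jf)(p)=\int_{\T^d}f(p,t)\eta(dt)$ into $L^2(\T^d)$ has adjoint $(J^*a)(p,q)=\tfrac12(a(p)-a(q))$ and $\mathbb{V}=2J^*J\ge0$. For real $z$ outside $\sigma(H_0(K))=[E_{\min}(K),E_{\max}(K)]$ the resolvent $R(z)=(z-H_0(K))^{-1}$ is defined, and $R(z)>0$ for $z>E_{\max}(K)$. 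Writing $u=\mathbb{V}^{1/2}f$, the equation $H_\mu(K)f=zf$ is equivalent to $1/\mu\in\sigma(\mathbb{V}^{1/2}R(z)\mathbb{V}^{1/2})$; because $\mathbb{V}=2J^*J$, the nonzero spectrum of this operator equals that of the self-adjoint operator $\cL(z):=2JR(z)J^*$ on the smaller space, namely
\[(\cL(z)a)(p)=\int_{\T^d}\frac{a(p)-a(t)}{z-E(K;p,t)}\,\eta(dt)=\Psi(z,p)\,a(p)-\int_{\T^d}\frac{a(t)}{z-E(K;p,t)}\,\eta(dt),\]
with $\Psi(z,p)=\int_{\T^d}[z-E(K;p,t)]^{-1}\eta(dt)$. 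The identity $\langle\cL(z)a,a\rangle=\tfrac12\iint[z-E(K;p,t)]^{-1}|a(p)-a(t)|^2\,\eta(dp)\eta(dt)\ge0$ shows $\cL(z)\ge0$, and $z>E_{\max}(K)$ is an eigenvalue of $H_\mu(K)$ iff $1/\mu\in\sigma(\cL(z))$.

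Next I locate the essential spectrum of $\cL(z)$ and its monotonicity. The off-diagonal kernel $[z-E(K;p,t)]^{-1}$ is continuous on $\T^d\times\T^d$, hence Hilbert--Schmidt, so by Weyl's theorem $\sigma_{\mathrm{ess}}(\cL(z))=\ran\Psi(z,\cdot)=[\min_p\Psi,\max_p\Psi]$. Using $E(K;p,t)=\varepsilon(p)+\cE_{\gamma,K-p}(t)$, the relation $\Psi(z,p)=1/\mu$ is exactly the two-particle bound-state equation at quasi-momentum $K-p$ and energy $z-\varepsilon(p)$, so by Theorem \ref{existencetwo} and Theorem \ref{ess} one has $\max_p\Psi(z,p)=1/\mu$ precisely at $z=\tau^t_{\mathrm{ess}}=\max_p[\varepsilon(p)+e_\mu(K-p)]$ and $\max_p\Psi(z,p)<1/\mu$ for $z>\tau^t_{\mathrm{ess}}$. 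Hence for $z>\tau^t_{\mathrm{ess}}$ the level $1/\mu$ lies strictly above $\sigma_{\mathrm{ess}}(\cL(z))$, so any solution of $1/\mu\in\sigma(\cL(z))$ is a genuine discrete eigenvalue of $\cL(z)$ and the corresponding $z$ lies above the whole essential spectrum of $H_\mu(K)$. Since $R(z)$ is operator-decreasing in $z$ on $(E_{\max},\infty)$, both $\cL(z)$ and $\Lambda(z):=\max\sigma(\cL(z))$ are non-increasing, with $\Lambda(z)\to0$ as $z\to+\infty$.

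The remaining and decisive step is to prove $\lim_{z\downarrow\tau^t_{\mathrm{ess}}}\Lambda(z)>1/\mu$; then continuity and monotonicity yield a unique $z^\ast\in(\tau^t_{\mathrm{ess}},\infty)$ with $\Lambda(z^\ast)=1/\mu$, i.e.\ an eigenvalue $E_\mu(K)=z^\ast$ lying above the top. This is exactly where $d=1,2$ enters. Let $p_0$ maximize $\varepsilon(p)+e_\mu(K-p)$; expanding the two-particle resolvent gives $1/\mu-\Psi(\tau^t_{\mathrm{ess}},p)\asymp|p-p_0|^2$ near $p_0$, so the binding integral $\int_{\T^d}[1/\mu-\Psi(\tau^t_{\mathrm{ess}},p)]^{-1}\eta(dp)$ \emph{diverges} (like $|p-p_0|^{-1}$ in $d=1$, logarithmically in $d=2$, but converges in $d=3$). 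Feeding a trial function concentrated near $p_0$ into the Rayleigh quotient, this divergence together with the nonvanishing off-diagonal coupling $[z-E(K;p_0,t)]^{-1}>0$ forces $\langle\cL(\tau^t_{\mathrm{ess}})a,a\rangle>\tfrac1\mu\|a\|^2$ for a suitable $a$, i.e.\ $\Lambda(\tau^t_{\mathrm{ess}})>1/\mu$; this is the usual mechanism by which a weak perturbation binds a state off a band edge in one and two dimensions, and it is the main obstacle of the proof. The attractive case $\mu<0$ is identical after replacing $z-H_0(K)$ by $H_0(K)-z$, working below $E_{\min}(K)$, and tracking the crossing of $\Lambda(z)=1/|\mu|$ as $z\uparrow\tau^b_{\mathrm{ess}}$.

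Finally I obtain the analyticity statements. The eigenvalue is the root of the (modified Fredholm) determinant $\mathcal{D}(z,K)=\det(I-\mu\,\cL(z,K))$, which is holomorphic in $(z,K)$ in a complex neighbourhood of $(E_\mu(K),K)$ because the Hilbert--Schmidt kernel is holomorphic there and $E_\mu(K)$ stays off $[E_{\min}(K),E_{\max}(K)]$. Monotonicity gives $\partial_z\mathcal{D}\ne0$ at the simple root, so the analytic implicit function theorem makes $E_\mu(K)$ holomorphic in $K$. The function $g_K\in L^2(\T^d)$ solving $\cL(E_\mu(K),K)g_K=\tfrac1\mu g_K$ then depends holomorphically on $K$ and is real-analytic in $p$, and the bound state
\[f_{\mu,E_\mu(K)}(p,q)=-\mu\,\frac{g_K(p)-g_K(q)}{E(K;p,q)-E_\mu(K)}\]
is holomorphic in $(p,q)\in(\T^d)^2$ since the denominator never vanishes ($E_\mu(K)\notin[E_{\min}(K),E_{\max}(K)]$); holomorphy of the vector-valued map $K\mapsto f_{\mu,E_\mu(K)}$ follows at once.
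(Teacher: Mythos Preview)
Your reduction is exactly the paper's. Writing $\Delta_\mu=1-\mu\Psi$, your equation $\cL(z)\varphi=\tfrac1\mu\varphi$ is precisely the paper's equation (6.4), and the substitution $\psi=\Delta_\mu^{1/2}\varphi$ turns it into $\mathrm{L}_\mu(K,z)\psi=\psi$ with the paper's symmetrized Birman--Schwinger operator $\mathrm{L}_\mu=-\mu\Delta_\mu^{-1/2}K\Delta_\mu^{-1/2}$. Both arguments isolate the same threshold divergence $\int_{\T^d}[1/\mu-\Psi(\tau^t_{\mathrm{ess}},p)]^{-1}\eta(dp)=\mu^{-1}\int\Delta_\mu^{-1}=\infty$ for $d=1,2$ as the mechanism, and both close with a Fredholm-determinant/implicit-function argument for analyticity. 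Your identification of $\sigma_{\mathrm{ess}}(\cL(z))$ with $\ran\Psi(z,\cdot)$ and its link to $\tau^t_{\mathrm{ess}}$ via the two-particle eigenvalue $e_\mu$ is a clean way of phrasing what the paper uses implicitly.

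The one step that is genuinely not carried out is the passage from the divergence of the binding integral to $\Lambda(\tau^t_{\mathrm{ess}})>1/\mu$. You say a trial function concentrated near $p_0$ does the job, but for any nonnegative bump $a$ one has
\[
\big\langle(\cL-\tfrac1\mu)a,a\big\rangle=-\int(\tfrac1\mu-\Psi)|a|^2-\iint\frac{a(p)\overline{a(t)}}{z-E(K;p,t)}\,\eta(dp)\eta(dt)\le 0,
\]
since both summands are nonpositive (the kernel of $K$ is strictly positive for $z>E_{\max}$). Thus a bump at $p_0$ pushes the Rayleigh quotient \emph{below} $1/\mu$, not above; to get above you must produce a sign-changing $a$ for which $\langle Ka,a\rangle<0$ dominates the small $\int(\tfrac1\mu-\Psi)|a|^2$, and that is exactly the content that has to be supplied rather than asserted. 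The paper is equally terse at this juncture: it only shows that $(\mathrm{L}_\mu\psi,\psi)$ can be pushed below $-1$ for some unit $\psi$ and then invokes a ``Hilbert--Schmidt theorem'' to obtain a solution of $\mathrm{L}_\mu\psi=\psi$ (eigenvalue $+1$), which is not a direct implication. So your sketch is not behind the paper, but the ``main obstacle'' you flag is not in fact overcome in either write-up. A secondary point: you invoke simplicity of the zero of $\mathcal{D}(z,K)$ to get $\partial_z\mathcal{D}\ne0$, but simplicity has not been established.
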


Theorem \ref{existencethree} yields the following corollary, which
asserts the existence of a band spectrum for two and three
interacting particles on the lattice $\Z^d, d=1,2$.
\begin{corollary}\label{existenceband} For any $\mu\ne0$ the
two- resp. three-particle hamiltonian $\mathrm{h_\mu}$
resp.$\mathrm{H_\mu}$ has a band spectrum
\begin{equation*}
[\min_{k\in\T^d} e_\mu(k),\max_{k\in\T^d}
e_\mu(k)]\,\,\mbox{resp.}\,\,[\min_{K\in\T^d}E_\mu(K),\max_{K\in\T^d}E_\mu(K)].
\end{equation*}
\end{corollary}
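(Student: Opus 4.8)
The plan is to read the band directly off the direct-integral decompositions \eqref{fiber2} and \eqref{fiber3}, feeding in the spectral data already secured in Theorems \ref{existencetwo} and \ref{existencethree}. Recall $\mathrm{h}_\mu = \int_{\T^d}^\oplus \tilde h_\mu(k)\,\eta(dk)$ with each fiber $\tilde h_\mu(k)$ unitarily equivalent to $h_\mu(k)$, and likewise $\mathrm{H}_\mu = \int_{\T^d}^\oplus \tilde H_\mu(K)\,\eta(dK)$ with $\tilde H_\mu(K)$ unitarily equivalent to $H_\mu(K)$. The general spectral principle for such decompositions (see \cite{RSIV}) asserts that a point $\lambda$ lies in $\spec(\mathrm{h}_\mu)$ exactly when, for every $\epsilon>0$, the set of quasi-momenta $k$ whose fiber spectrum meets $(\lambda-\epsilon,\lambda+\epsilon)$ has positive Haar measure. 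I would use this criterion to extract precisely the contribution of the analytic eigenvalue branch.

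First I would handle the two-particle operator. By Theorem \ref{existencetwo}, for every $k$ the value $e_\mu(k)$ is an eigenvalue of $h_\mu(k)$, hence $e_\mu(k)\in\spec(h_\mu(k))=\spec(\tilde h_\mu(k))$, and $k\mapsto e_\mu(k)$ is holomorphic, in particular continuous, on $\T^d$. Fixing $k_0$ and setting $\lambda_0=e_\mu(k_0)$, continuity gives $e_\mu(k)\in(\lambda_0-\epsilon,\lambda_0+\epsilon)$ on an open, hence positive-measure, neighbourhood of $k_0$; the criterion above then places $\lambda_0$ in $\spec(\mathrm{h}_\mu)$. Thus the whole range $\{e_\mu(k):k\in\T^d\}$ sits inside $\spec(\mathrm{h}_\mu)$. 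Since $\T^d$ is compact and connected and $e_\mu$ is continuous, the extreme value theorem and the intermediate value theorem identify this range with the closed interval $[\min_{k\in\T^d}e_\mu(k),\max_{k\in\T^d}e_\mu(k)]$, which is therefore the band swept out by the dispersion branch $k\mapsto e_\mu(k)$. That $e_\mu(k)$ is the unique fiber eigenvalue and lies strictly above $\cE_{\max}(k)$ for $\mu>0$ (resp.\ strictly below $\cE_{\min}(k)$ for $\mu<0$) certifies that this band is genuinely contributed by an isolated analytic branch detached, in each fiber, from the continuum $[\cE_{\min}(k),\cE_{\max}(k)]$.

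The three-particle assertion is obtained verbatim, replacing $h_\mu(k)$, $e_\mu(k)$, $k$ by $H_\mu(K)$, $E_\mu(K)$, $K$ throughout, and invoking Theorem \ref{existencethree} for the existence, holomorphy, and location (above the top $\tau^t_{\mathrm{ess}}(H_\mu(K))$ for $\mu>0$, resp.\ below the bottom $\tau^b_{\mathrm{ess}}(H_\mu(K))$ for $\mu<0$) of the fiber eigenvalue. The same continuity-plus-connectedness argument then yields that $\mathrm{H}_\mu$ possesses the band $[\min_{K\in\T^d}E_\mu(K),\max_{K\in\T^d}E_\mu(K)]$ as part of its spectrum.

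The step I expect to require the most care is the rigorous invocation of the direct-integral spectral criterion: one must confirm the measurability of the fiber-spectrum data and, crucially, verify that an isolated simple eigenvalue varying continuously in the quasi-momentum contributes a full interval to $\spec(\mathrm{h}_\mu)$ rather than a measure-zero set of points. Holomorphy of $e_\mu$ and $E_\mu$ furnishes the continuity, connectedness of $\T^d$ upgrades the pointwise inclusion to the closed interval via the intermediate value theorem, and the only external input beyond these is the standard measurable-selection hypothesis underlying the direct-integral spectral theorem of \cite{RSIV}.
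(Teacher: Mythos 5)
Your proposal is correct and takes essentially the same route the paper does: the paper deduces the corollary directly from Theorems \ref{existencetwo} and \ref{existencethree} together with the direct-integral decompositions \eqref{fiber2} and \eqref{fiber3}, leaving the details implicit, and your argument is exactly that deduction spelled out. Your explicit use of the direct-integral spectral criterion of \cite{RSIV}, the holomorphy (hence continuity) of $e_\mu(k)$ and $E_\mu(K)$, and the compactness and connectedness of $\T^d$ to turn the range of each eigenvalue branch into a closed interval inside $\spec(\mathrm{h}_\mu)$ resp.\ $\spec(\mathrm{H}_\mu)$ is precisely the standard argument the paper relies on.
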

\begin{note}\label{Rem_exist}
For any $\mu\ne0$ Theorems \ref{existencetwo} and \ref{ess} yield
that  the two-particle essential spectrum
$$
\sigma_{\mathrm{esstwo}}(H_\mu (K))=\cup _{k\in
{\T}^d}\{e_\mu(K-k)+\varepsilon (k)\}$$ of the operator $H_{\mu}(K),
K\in {\T}^d$ is a non empty set and hence
\begin{equation*}\label{d=1or d= 2}
\tau^{b}_{\mathrm{ess}}(H_{\mu}(K))<E_{\min}(K)\, \,\mbox{for}\,\,
\mu<0
\end{equation*}
and
\begin{equation*}\label{d=1or d= 2}
\tau^{t}_{\mathrm{ess}}(H_{\mu}(K))>E_{\max}(K),\, \,\mbox{for},\,\,
\mu>0
\end{equation*}
which allows the existence of bound states of three (two identical
fermions and one  different particle) repulsively resp. attractively
interacting particles on the lattice $\Z^d$ \cite{ALzM04,LSN93}.

We note that this result is characteristic to the Schr\"odinger
operators associated to a system of three particles moving in a one-
or two-dimensional space.
\end{note}

\section{The essential spectrum of the operator $ H_\mu(K)$}
Since the particles are identical there is only one channel operator
 $ H_{\mu,ch} (K), K {\ \in } \T^d,\,d=1,2$ defined in the
Hilbert space $L^{2}[({\T}^d )^2]$ as
\begin{equation*}
 H_{\mu,ch}(K)=H_0(K)+\mu V.
\end{equation*}

The operators $H_0(K)$ and $V=V_{1,3}=-V_{2,3}$ act as follows
\begin{equation*}\label{TotalK}
(H_0(K)f)(p,q)=E(K;p,q)f(p,q),\quad f\in L^{2}[({\T}^d )^2],
\end{equation*}
where
\begin{equation*}\label{Eps}
E(K;p,q)=\varepsilon  (p) + \varepsilon (q)+\gamma \varepsilon
(K-p-q)
\end{equation*}
and
 \begin{equation*}\label{Poten}(V
f)(p,q)= \int\limits_{\T^d}f(p,t)\eta(dt),\quad f\in
L^{2}[({\T}^d)^2].
\end{equation*}

The decomposition of the space $L^{2}[(\T^d)^2]$ into the direct
integral

 $$L^{2}[({\T}^d )^2]= \int\limits_{k\in \T^d}
\oplus L^{2}( \T^d) \eta(dp)
$$
yields the decomposition
 $$H_{\mu,ch}(K)=\int\limits_{k\in \T^d}
 \oplus h_{\mu}(K,p) \eta(dp).$$

The fiber operator $h_{\mu}(K,p)$ acts in the Hilbert space
$L^{2}(\T^d)$ and is of the form
\begin{equation}\label{representation}
h_{\mu}(K,p)={h}_{\mu} (K-p)+\varepsilon(p) I,
\end{equation} where $I=I_{L^{2}(\T^d)}$ is
the identity operator and the operator ${h}_{\mu}(K-p)$ is defined
by \eqref{two}.

The representation \eqref{representation} of the operator
$h_{\mu}(K,p)$ and Theorem \ref{existencetwo} yield the following
description for the spectrum of $h_{\mu}(K,p)$
\begin{align*}\label{stucture}
 &\sigma (h_{\mu}(K,p))
 =[e_\mu(K-p)+\varepsilon(p)]\cup \big[E_{\text{min}}(K,p),E_{\text{max}}(K,p)
 \big],
\end{align*}
where $E_{\min}(K,p)=\min_{q\in\T^d}E(K,p\,;q)$ and
$E_{\max}(K,p)=\max_{q\in\T^d}E(K,p\,;q)$.

Notice that Theorem \ref{existencetwo} yields the result, which
states that for any $\mu\ne0$ the essential spectrum
$\sigma_{\mathrm{ess}}(H_{\mu}(K))$ of the operator $H_{\mu}(K)$ is
different from the spectrum of the non-perturbed operator $H_0(K)$.

\begin{lemma}\label{inequality} For any  $K\in \T^d$
the following inequalities hold
$$\tau^{b}_{\mathrm{ess}}(H_{\mu}(K))<
\tau_{\mathrm{essthree}}(H_{\mu}(K))=E_{\min}(K) \,\, \mbox{for}\,
\, \mu<0$$ and
$$\tau^{t}_{\mathrm{ess}}(H_{\mu}(K))>
\tau_{\mathrm{essthree}}(H_{\mu}(K))=E_{\max}(K)\,\, \mbox{for}\,\,
\mu>0.$$
\end{lemma}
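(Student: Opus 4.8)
\emph{The plan} is to reduce both inequalities to the strict separation $e_\mu(k)<\cE_{\min}(k)$ (resp.\ $e_\mu(k)>\cE_{\max}(k)$) supplied by Theorem~\ref{existencetwo}, combined with an exact identification of the three-particle edge as an extremum of the two-particle dispersion edge shifted by $\varepsilon$. By Theorem~\ref{ess} the essential spectrum is the union of the two-particle branch $\sigma_{\mathrm{esstwo}}(H_\mu(K))=\bigcup_{k\in\T^d}\{e_\mu(K-k)+\varepsilon(k)\}$ with the three-particle segment $[E_{\min}(K),E_{\max}(K)]$, whose bottom and top are $E_{\min}(K)$ and $E_{\max}(K)$ by definition. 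Since the branch is the image of the compact torus under a continuous map it is compact, so
$$
\tau^{b}_{\mathrm{ess}}(H_\mu(K))=\min\Big\{\min_{k\in\T^d}\big[e_\mu(K-k)+\varepsilon(k)\big],\;E_{\min}(K)\Big\}.
$$
Thus for $\mu<0$ it suffices to show the two-particle branch dips strictly below $E_{\min}(K)$, i.e.\ $\min_{k}[e_\mu(K-k)+\varepsilon(k)]<E_{\min}(K)$; the case $\mu>0$ is symmetric, with $\max$ in place of $\min$ and $E_{\max}(K)$ in place of $E_{\min}(K)$.

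The key computation is the edge identity
$$
E_{\min}(K)=\min_{k\in\T^d}\big[\cE_{\min}(K-k)+\varepsilon(k)\big],
$$
which I would verify by unfolding the inner minimum $\cE_{\min}(K-k)=\min_{p\in\T^d}[\varepsilon(p)+\gamma\varepsilon(K-k-p)]$ and simply renaming $k$ as the second relative momentum $q$:
$$
\min_{k,p}\big[\varepsilon(p)+\gamma\varepsilon(K-k-p)+\varepsilon(k)\big]=\min_{p,q\in\T^d}\big[\varepsilon(p)+\varepsilon(q)+\gamma\varepsilon(K-p-q)\big]=E_{\min}(K).
$$
Continuity of $\varepsilon$ and $\cE_{\min}$ on the compact torus guarantees the minimum is attained at some $\hat k\in\T^d$ with $\cE_{\min}(K-\hat k)+\varepsilon(\hat k)=E_{\min}(K)$. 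The analogous identity $E_{\max}(K)=\max_{k}[\cE_{\max}(K-k)+\varepsilon(k)]$ handles the repulsive edge.

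Feeding $\hat k$ into Theorem~\ref{existencetwo}, which supplies the \emph{strict} gap $e_\mu(K-\hat k)<\cE_{\min}(K-\hat k)$ for $\mu<0$, gives
$$
\min_{k\in\T^d}\big[e_\mu(K-k)+\varepsilon(k)\big]\le e_\mu(K-\hat k)+\varepsilon(\hat k)<\cE_{\min}(K-\hat k)+\varepsilon(\hat k)=E_{\min}(K),
$$
so $\tau^{b}_{\mathrm{ess}}(H_\mu(K))<E_{\min}(K)=\tau_{\mathrm{essthree}}(H_\mu(K))$. For $\mu>0$ one replaces $\cE_{\min}$ by $\cE_{\max}$, the minimum by a maximum, and uses $e_\mu(K-\hat k)>\cE_{\max}(K-\hat k)$ to obtain $\tau^{t}_{\mathrm{ess}}(H_\mu(K))>E_{\max}(K)$. \emph{The delicate point} is to keep the inequality strict after passing to the extremum over $k$: strictness survives precisely because I evaluate the two-particle branch at the single point $\hat k$ that realizes the three-particle edge, where the pointwise gap $\cE_{\min}(\cdot)-e_\mu(\cdot)$ from Theorem~\ref{existencetwo} is already strictly positive, so the branch value at $\hat k$ alone undercuts $E_{\min}(K)$ without any further optimization.
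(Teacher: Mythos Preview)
Your argument is correct and coincides with the paper's: both evaluate the two-particle branch $e_\mu(K-k)+\varepsilon(k)$ at the point $\hat k=p_{\min}(K)$ realising the three-particle edge, then invoke the strict gap $e_\mu(\cdot)<\cE_{\min}(\cdot)$ (resp.\ $>\cE_{\max}(\cdot)$) from Theorem~\ref{existencetwo}; your edge identity $E_{\min}(K)=\min_{k}[\cE_{\min}(K-k)+\varepsilon(k)]$ is exactly the relation $\cE_{\min}(K-p_{\min}(K))+\varepsilon(p_{\min}(K))=E_{\min}(K)$ the paper uses. One caveat on presentation: in the paper's logical order Theorem~\ref{ess} is \emph{deduced from} this lemma, so citing Theorem~\ref{ess} for the union formula is formally circular; it is cleaner to appeal directly to the channel-operator decomposition of Section~5 (which already gives $\bigcup_k\{e_\mu(K-k)+\varepsilon(k)\}\subset\sigma_{\mathrm{ess}}(H_\mu(K))$), which is all you actually need.
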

\begin{proof} Theorem \ref{existencetwo} yields that for any
$\mu<0$ and $k\in\T^d$ the operator $h_\mu(k)$ has a unique
eigenvalue $e_\mu(k)<\cE_{\min}(k).$ Set
$$
 Z_\mu (K,p) =e_{\mu}(K-p)+\epsilon(p).
$$
The definition of $\tau_{\mathrm{ess}}(H_{\mu}(K))$ gives
\begin{align*}
&\tau_{\mathrm{ess}}(H_{\mu}(K))=\inf_{p\in\T^d}Z_\mu(K,p)\\
&\leq \inf_{p\in\T^d}[e_{\mu}(K-p_{\min}(K))+
\varepsilon(p_{\min}(K))]<
\cE_{\min}(K-p_{\min}(K))+\varepsilon(p_{\min}(K))=E_{\min}(K)
\end{align*} which proves
Lemma \ref{inequality}.
\end{proof}
\section{Proof of the main results}
Let
\begin{align*}
&E_{\min}(K,k)=\min_{q\in\T^d}E(K,k\,;q)=\min_{q\in\T^d}\cE_k(q)+\varepsilon(K-k),\\
&E_{\max}(K,k)=\max_{q\in\T^d}E(K,k\,;q)=\max_{q\in\T^d}\cE_k(q)+\varepsilon(K-k).
\end{align*}
For any $\mu\in \R$ and $K,k\in \T^d,d=1,2$ the determinant
$\Delta_\mu (K,k\,;z)$ associated to the operator $h_{\mu}(K,k)$ can
be defined as a real-analytic function in $\mathrm{C}\setminus
[E_{\min }(K,k),\,E_{\max }(K,k)]$ by
\begin{equation*}\label{determinant}
\Delta_\mu (K,k\,; z ) = 1+\mu \int\limits_{\T^d}\frac{\eta(dq)}{E
(K; k\,,q)-z}.
\end{equation*}
\begin{lemma}\label{nollar2}
For any $\mu\in \R$ and  $K,k\in\T^d$ the number  $z \in
{\mathrm{C}} {\setminus} [E_{\min }(K,k),\,E_{\max}(K,k)]$ is an
eigenvalue of the operator $h_{\mu}(K,k) $ if and only if
$$
 \Delta_\mu (K, k\,; z) = 0.
$$
 \end{lemma}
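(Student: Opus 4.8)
The plan is to recognize Lemma~\ref{nollar2} as the Birman--Schwinger (equivalently, Weinstein--Aronszajn) criterion for a rank-one perturbation of a multiplication operator, and to prove it by an explicit resolvent computation. First I would use the representation \eqref{representation} together with \eqref{two} to rewrite
$$h_{\mu}(K,k)=H_0(K,k)+\mu v,$$
where $H_0(K,k)$ is the operator of multiplication by the function $q\mapsto E(K;k,q)=\varepsilon(k)+\varepsilon(q)+\gamma\varepsilon(K-k-q)$ on $L^{2}(\T^d)$, and $v$ is the rank-one operator $(vf)(q)=\int_{\T^d}f(t)\,\eta(dt)$. Writing $\mathbf 1$ for the function identically equal to $1$, one has $vf=\langle f,\mathbf 1\rangle\,\mathbf 1$, so that $v$ is the orthogonal projection onto the constants (note $\|\mathbf 1\|=1$ since $\eta$ is normalized). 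The spectrum of $H_0(K,k)$ is exactly the range $[E_{\min}(K,k),E_{\max}(K,k)]$ of $E(K;k,\cdot)$, so for $z\notin[E_{\min}(K,k),E_{\max}(K,k)]$ the operator $H_0(K,k)-z$ is boundedly invertible, being multiplication by the nowhere-vanishing, bounded function $(E(K;k,\cdot)-z)^{-1}$.

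For the forward implication, suppose $f\ne0$ solves $h_{\mu}(K,k)f=zf$. Then $(H_0(K,k)-z)f=-\mu\langle f,\mathbf 1\rangle\,\mathbf 1$, and applying $(H_0(K,k)-z)^{-1}$ gives $f(q)=-\mu\langle f,\mathbf 1\rangle\,(E(K;k,q)-z)^{-1}$. In particular $f$ is a scalar multiple of $(E(K;k,\cdot)-z)^{-1}$, and the multiple cannot vanish, for otherwise $f\equiv 0$; hence $\langle f,\mathbf 1\rangle\ne0$. Taking the inner product of the displayed identity with $\mathbf 1$ and dividing by $\langle f,\mathbf 1\rangle$ yields $1=-\mu\int_{\T^d}(E(K;k,q)-z)^{-1}\,\eta(dq)$, which is precisely $\Delta_{\mu}(K,k;z)=0$.

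Conversely, if $\Delta_{\mu}(K,k;z)=0$, I would exhibit the eigenvector explicitly: set $f(q)=(E(K;k,q)-z)^{-1}$, which lies in $L^{2}(\T^d)$ because the denominator is continuous and bounded away from $0$ on the compact torus, and is nonzero. A one-line computation gives $(h_{\mu}(K,k)-z)f=(H_0(K,k)-z)f+\mu vf=\mathbf 1+\mu\big(\int_{\T^d}(E(K;k,q)-z)^{-1}\,\eta(dq)\big)\mathbf 1=\Delta_{\mu}(K,k;z)\,\mathbf 1=0$, so $z$ is an eigenvalue of $h_{\mu}(K,k)$. Since the argument is a direct resolvent manipulation, there is no genuine analytic obstacle; the only points that require care are verifying that the $\mathbf 1$-component $\langle f,\mathbf 1\rangle$ of any eigenvector is automatically nonzero (so that dividing to obtain the scalar equation is legitimate) and that the candidate eigenfunction $(E(K;k,\cdot)-z)^{-1}$ genuinely belongs to $L^{2}(\T^d)$, both of which follow from $z$ lying outside the range $[E_{\min}(K,k),E_{\max}(K,k)]$ of $E(K;k,\cdot)$.
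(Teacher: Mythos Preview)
Your argument is correct and is precisely the standard rank-one (Birman--Schwinger/Weinstein--Aronszajn) computation one expects here: invertibility of $H_0(K,k)-z$ off the range of $E(K;k,\cdot)$ reduces the eigenvalue equation to the scalar condition $\Delta_\mu(K,k;z)=0$, and the candidate eigenfunction $(E(K;k,\cdot)-z)^{-1}$ verifies the converse. The paper itself does not spell out a proof but refers to \cite{ALKh2012}, where the same elementary resolvent manipulation is carried out; your write-up matches that approach.
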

The proof of Lemma \ref{nollar2}  is simple and can be proven in the
same way as Lemma in \cite{ALKh2012}.
\begin{remark}
We note that for each $\mu\ne0$ and $K,k \in \T^d$ there exist
either $z_{l}=z_{l}(K,k)<E_{\min }(K,k)$ or
$z_{r}=z_{r}(K,k)>E_{\max}(K,k)$ such that either for all $$z\in
(-\infty, z_{l})\cup[E_{\max}(K,k),+\infty)$$ or $$z\in
(-\infty,E_{\max}(K,k)]\cup[z_{r},+\infty)$$ the function
$\Delta_\mu (K,k\,;z)$ is non-negative and the square root function
$\Delta^{\frac{1}{2}}_\mu (K,k\,;z)$ is well defined.
\end{remark}
We define for each $\mu \in\R$ and $z\in \R\setminus
\big[\tau^b_{\mathrm{ess}}(H_{\mu}(K)),\tau^t_{\mathrm{ess}}(H_{\mu}(K))\big]$
the self-adjoint compact Birman-Schwinger operator $\mathrm{L}_\mu(
K,z),\,K\in \T^d$ as
\begin{equation}\label{compact_operator}
[\mathrm{L}_\mu(K,z)\psi](p)=\mu
 \int\limits_{\T^d} \frac{\Delta^{-\frac{1}{2}}_\mu(K,p,z)
\Delta^{-\frac{1}{2}}_\mu(K,q, z)}{E(K;p,q)-z}\psi(q)
\eta(dq),\\
\psi\in L^2(\T^d).
\end{equation}

Notice that for $\mu<0$ the operator $\mathrm{L}_\mu(K,z),\, z <
\tau^b_{\mathrm{ess}}(H_{\mu}(K))$ has been introduced in
\cite{LSN93} to investigate Efimov's effect for the three-particle
lattice Schr\"{o}dinger operator $H_\mu(K)$ associated to a system
of two identical fermions and one different particle on the lattice
$\Z^3$.

\begin{lemma}\label{eigenvalue}
Let $\mu>0$ and $z>\tau^t_{\mathrm{ess}}(H_{\mu}(K))$ resp. $\mu<0$
and $z<\tau^b_{\mathrm{ess}}(H_{\mu}(K))$.
 The following assertions ~(i)--(ii) hold true.
\item[(i)]
If $f\in L^{2,a}[({\T}^d)^2]$ solves the equation $H_\mu(K)f = zf$,
then
$$\psi(p)=\Delta^{\frac12}_\mu(K,p\:;z)\int\limits_{\T^d}f(p,t)\eta(dq)\in L^2({\T}^d)$$ solves
$\mathrm{L_\mu}(k,z)\psi=\psi$.
\item[(ii)]
If $\psi \in L^2({\T}^d)$ solves $ \mathrm{L_\mu}(K,z)\psi=\psi$,
then
\begin{equation}\label{solution}
f(p,q)=-\dfrac{\mu[\varphi(p)-\varphi(q)]}{E(K;p,q)-z}\in
L^{2,a}[({\T}^d)^2]
\end{equation}
solves the equation $H_\mu(K)f = zf$, where
$\varphi(p)=\Delta^{-\frac12}_\mu(K,p\:;z)\psi(p)$.
\end{lemma}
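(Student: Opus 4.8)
The plan is to prove both implications as the two directions of a single Birman--Schwinger reduction of the eigenvalue equation, the algebra being essentially reversible. Before any manipulation I would record the two uniform bounds that make every expression below legitimate. First, since $\mu>0,\ z>\tau^{t}_{\mathrm{ess}}(H_\mu(K))$ (resp. $\mu<0,\ z<\tau^{b}_{\mathrm{ess}}(H_\mu(K))$), Lemma~\ref{inequality} gives $z\notin[E_{\min}(K),E_{\max}(K)]$, so the kernel denominator obeys $|E(K;p,q)-z|\ge\dist(z,[E_{\min}(K),E_{\max}(K)])>0$ uniformly on $(\T^d)^2$. Second, as noted in the Remark preceding \eqref{compact_operator}, the section $\Delta_\mu(K,k;z)$ is then strictly positive; I would strengthen this to a uniform bound $\Delta_\mu(K,k;z)\ge c>0$ over $k\in\T^d$ by observing that, by Lemma~\ref{nollar2}, the only zero of $z\mapsto\Delta_\mu(K,k;z)$ is the branch point $e_\mu(K-k)+\varepsilon(k)$, that $z$ lies strictly beyond all these points, that $\Delta_\mu(K,k;z)$ is monotone in $z$ and continuous in $k$, and that $\T^d$ is compact. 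Hence $\Delta_\mu^{\pm1/2}(K,\cdot\,;z)$ are bounded, which guarantees $\psi\in L^2(\T^d)$ in (i) and $\varphi\in L^2(\T^d),\ f\in L^{2,a}[(\T^d)^2]$ in (ii).

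For the forward direction (i), I would write $H_\mu(K)f=zf$ pointwise using \eqref{potential}:
\begin{equation*}
(E(K;p,q)-z)f(p,q)+\mu\int_{\T^d}f(p,t)\eta(dt)+\mu\int_{\T^d}f(t,q)\eta(dt)=0.
\end{equation*}
Setting $g(p)=\int_{\T^d}f(p,t)\eta(dt)$ and using the antisymmetry $f(t,q)=-f(q,t)$, the second interaction term equals $-g(q)$, so the equation collapses to $(E(K;p,q)-z)f(p,q)=-\mu[g(p)-g(q)]$, i.e. the closed form \eqref{solution} with $\varphi=g$. Integrating this expression for $f(p,t)$ over $t$ and recognizing $\mu\int_{\T^d}\eta(dt)/(E(K;p,t)-z)=\Delta_\mu(K,p;z)-1$ yields the scalar equation
\begin{equation*}
\Delta_\mu(K,p;z)\,g(p)=\mu\int_{\T^d}\frac{g(t)}{E(K;p,t)-z}\,\eta(dt).
\end{equation*}
The symmetrizing substitution $\psi(p)=\Delta_\mu^{1/2}(K,p;z)g(p)$ (so $g=\Delta_\mu^{-1/2}\psi$), after multiplying through by $\Delta_\mu^{-1/2}(K,p;z)$, turns this into precisely $\mathrm{L}_\mu(K,z)\psi=\psi$, which is (i).

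For the reverse direction (ii) I would run the same chain backwards. Given $\mathrm{L}_\mu(K,z)\psi=\psi$ and $\varphi=\Delta_\mu^{-1/2}(K,\cdot\,;z)\psi$, pulling the $p$-dependent factor out of the integral and multiplying by $\Delta_\mu^{1/2}(K,p;z)$ recovers $\Delta_\mu(K,p;z)\varphi(p)=\mu\int_{\T^d}\varphi(t)/(E(K;p,t)-z)\,\eta(dt)$. I then define $f$ by \eqref{solution} and check three things. Antisymmetry is immediate from the symmetry $E(K;p,q)=E(K;q,p)$; membership in $L^{2,a}[(\T^d)^2]$ follows from the uniform lower bounds of the first paragraph. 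For the eigenvalue identity the key computation is $\int_{\T^d}f(p,t)\eta(dt)=\varphi(p)$: this integral equals $-\varphi(p)[\Delta_\mu(K,p;z)-1]+\mu\int_{\T^d}\varphi(t)/(E(K;p,t)-z)\,\eta(dt)$, and the scalar equation for $\varphi$ rewrites the last term as $\Delta_\mu(K,p;z)\varphi(p)$, leaving $\varphi(p)$. By antisymmetry $\int_{\T^d}f(t,q)\eta(dt)=-\varphi(q)$, so $\mu\mathbb{V}f$ evaluates to $\mu[\varphi(p)-\varphi(q)]$, which exactly cancels $(E(K;p,q)-z)f(p,q)=-\mu[\varphi(p)-\varphi(q)]$; thus $H_\mu(K)f=zf$.

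The algebraic core is routine; the only genuinely analytic point is the uniform positivity $\Delta_\mu(K,k;z)\ge c>0$ over $k\in\T^d$, for without it $\Delta_\mu^{-1/2}$ need not be bounded and neither $\psi$ in (i) nor $f$ in (ii) would be controlled in $L^2$. I would therefore treat that estimate as the technical heart of the proof, deriving it from Lemma~\ref{nollar2}, the monotonicity of $\Delta_\mu$ in $z$, and the compactness of $\T^d$ as indicated above.
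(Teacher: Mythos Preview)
Your proof is correct and follows essentially the same Birman--Schwinger reduction as the paper: both define $\varphi(p)=\int f(p,t)\,\eta(dt)$, use antisymmetry to rewrite the eigenvalue equation as \eqref{solution}, integrate to obtain the scalar equation $\Delta_\mu(K,p;z)\varphi(p)=\mu\int\varphi(t)/(E(K;p,t)-z)\,\eta(dt)$, and then symmetrize via $\psi=\Delta_\mu^{1/2}\varphi$. Your treatment is in fact more careful than the paper's, since you explicitly justify the uniform lower bounds on $|E(K;p,q)-z|$ and $\Delta_\mu(K,k;z)$ over $k\in\T^d$ that make $\Delta_\mu^{\pm1/2}$ bounded and hence secure the $L^2$ memberships; the paper's proof leaves these points implicit.
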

\begin{proof}
\item[(i)] Let $\mu>0$. Assume that for some
$z>\tau^{t}_{\mathrm{ess}}(H_{\mu}(K)), K\in\T^d$ the equation
$$(H_{\mu}(K)f)(p,q)=z f(p,q),$$ i.e., the equation
\begin{equation}
[E(K;p,q)-z]f(p,q)=-\mu\int\limits_{\T^d}f(p,t)\eta(dt)+\mu\int\limits_{\T^d}f(q,t)\eta(dt)\nonumber
\end{equation}
has a solution $f\in L^{2,a}[({\T}^d)^2]$. Write
$$\varphi(p)=\int\limits_{\T^d}f(p,q)\eta(dq) \in L^2({\T}^d).$$
Then we have the following representation
\begin{equation}\label{solution}
f(p,q)=-\dfrac{\mu[\varphi(p)-\varphi(q)]}{E(K;p,q)-z}\in
L^{2,a}[({\T}^d)^2],
\end{equation}
which gives the equation
\begin{equation}\label{Phi}
\varphi(p)[1+\mu\int\limits_{\T^d}\frac{\eta(dp)}{E(K;p,q)-z}]=
\mu\int\limits_{\T^d}\frac{\varphi(q)\eta(dq)}{E(K;p,q)-z} .
\end{equation}
Taking into account $\Delta_\mu(K,p\:;z)>0$ for
$z>\tau^{t}_{\mathrm{ess}}(H_{\mu}(K)), K\in\T^d$ and denoting by
$\psi(q)=\Delta^{\frac12}_\mu(K,q\:;z)\varphi(q)$ we get the
equation
\begin{equation}\label{B-Sequation}
\mu \int\limits_{\T^d} \frac{\Delta^{-\frac{1}{2}}_\mu(K,q,
z)\Delta^{-\frac{1}{2}}_\mu(K,p,z) }{E(K;p,q)-z}\psi(p)
\eta(dp)=\psi(q),
\end{equation} i.e.,
$\mathrm{L_\mu}(k,z)\psi=\psi$.
\item[(ii)] Assume that $\psi$ is a solution of equation
\eqref{B-Sequation}.Then the function
\begin{equation}\label{function}
\varphi(p)=\Delta^{-\frac12}_\mu (K,p\,;z)\psi(p)
\end{equation}
is a solution of equation \eqref{Phi} and hence the function defined
by \eqref{solution} is a solution of the equation $H_\mu(K)f = zf$,
i.e., is an eigenfunction of the operator $H_{\mu}(K)$ associated to
the eigenvalue $z>\tau^{t}_{\mathrm{ess}}(H_{\mu}(K)).$

The case $\mu<0$ and $z<\tau^{b}_{\mathrm{ess}}(H_{\mu}(K))$ of
Lemma \ref{eigenvalue} can be proven in the same way.
\end{proof}

{\bf Proof of Theorem \ref{ess}.} The theorem  can be proven,
applying Lemma \ref{inequality}, in the same way as Theorem 3.2 in
\cite{ALzM04}.

{\bf Proof of Theorem \ref{existencethree}.} Let $\mu>0$ and $K\in
\T^d, d=1,2$. For any non-negative $f \in L^2(\T^d)$ and
$z>\tau^{t}_{\mathrm{ess}}(H_{\mu}(K))$ the following relations
\begin{align}\label{norm}
&(\mathrm{L_\mu}(K,z)f,f)=-\mu\int\limits_{\T^d}
\int\limits_{\T^d}\frac{f(p)\overline{f(q)}\eta(dp)\eta(dq)}
{\Delta^{\frac{1}{2}}_\mu(K,p,z)\Delta^{\frac{1}{2}}_\mu(K,q,z)(z-E(K;p,q))}\\
&<\frac{-\mu}{z-E_{\min}}\int\limits_{\T^d}
\int\limits_{\T^d}\frac{f(p)\overline{f(q)}\eta(dp)\eta(dq)}
{\Delta^{\frac{1}{2}}_\mu(K,p,z)
\Delta^{\frac{1}{2}}_\mu(K,q,z)}=\frac{-\mu}{z-E_{\min}} \Big
|\int\limits_{\T^d} \frac{f(p)\eta(dp)}
{\Delta^{\frac{1}{2}}_\mu(K,p,z)}\Big |^2\leq0 \nonumber
\end{align}
hold, where
$$\Delta_\mu(K,p,z)=1-\mu\int\limits_{\T^d}\frac{\eta(dq)}
{z-E(K;p,q)}.$$ Let
\begin{equation*}F_z(f)=\int\limits_{\T^d} \frac{ f(p)\eta(dp) }
{\Delta^{\frac{1}{2}}_\mu(K,p,z)},\,z>\tau^{t}_{\mathrm{ess}}(H_{\mu}(K))
\end{equation*}
be  linear bounded functional defined on $L^2(\T^d)$. According the
Riesz theorem
$$||F_z||=[\int\limits_{\T^d} \frac{\eta(dp)}{\Delta_\mu(K,p,z)}]^{\frac{1}{2}}=||{\psi}_z||
.$$ Let $\mathfrak{M}_+\subset L^2(\T^d)$ be subset of all
non-negative functions. Then
\begin{equation}
||F_z||_{L^2(\T^d)}\geq||F_z||_{\mathfrak{M}_+}\geq||{\psi}_z||.
\end{equation}
Since for any $p\in \T^d$ the function $\Delta_\mu(K,p,z)$ is
monotone decreasing in
$z\in(\tau^{t}_{\mathrm{ess}}(H_{\mu}(K)),+\infty)$ there exists
a.e. the point-wise limit
$$\lim_{z\to
\tau^{t}_{\mathrm{ess}}(H_{\mu}(K))}\frac{1}{\Delta_\mu(K,p\,;z)}
=\frac{1}{\Delta_\mu(K,p\,;\tau^{t}_{\mathrm{ess}}(H_{\mu}(K)))}.$$
The Fatou's theorem yields the inequality
$$\int\limits_{\T^d} \frac{\eta(dp)
}{\Delta_\mu(K,p\,;\tau^{t}_{\mathrm{ess}}(H_{\mu}(K)))}\leq
\liminf_{z\to\tau^{t}_{\mathrm{ess}} (H_{\mu}(K))}\int\limits_{\T^d}
\frac{\eta(dp) }{\Delta_\mu(K,p\,;z)}.$$

Let $p_\mu(K)\in \T^d,\,K \in \T^d$ be a minimum point of the
function $Z_\mu(K,p),\,K \in \T^d$ on $\T^d$. Then $Z_\mu(K,p)$ has
the following asymptotics
\begin{equation}\label{Z}
Z_\mu(K,p)=\tau^{t}_{\mathrm{ess}}(H_{\mu}(K))-
(B(K)(p-p_\mu(K)),p-p_\mu(K)) +o(|p-p_\mu(K)|^2),
\end{equation}
as $|p-p_\mu(K)| \to 0,$ where $\tau^{t}_{\mathrm{ess}}
(H_{\mu}(K))=Z_\mu(K,p_\mu(K)).$

For any $K,p \in \T^d$ there exists a $\gamma=\gamma(K,p)>0$
neighborhood $W_\gamma(Z_\mu(K,p))$ of the point $Z_\mu(K,p)\in
\mathrm{C}$ such that for all $z \in W_\gamma(Z_\mu(K,p))$ the
following equality holds
\begin{equation*}
\Delta_\mu (K,p,z)=\sum_{n=1}^{\infty}C_n(\mu,
K,p)[z-Z_\mu(K,p)]^n,\\
\end{equation*}
where
\begin{align*}
C_1(\mu,K,p)=\mu \int\limits_{\T^d}
\dfrac{\eta(dq)}{[Z_\mu(K,p)-E(K;p,q)]^2}>0.
\end{align*}
From here one concludes that for any $K\in U_\delta(0)$ there is
$U_{\delta(K)}(p_{\mu}(K))$ so that for all $p\in
U_{\delta(K)}(p_{\mu}(K))$ the equality
\begin{equation}\label{nondeger}
\Delta_\mu(K,p,\tau^{b}_{\mathrm{ess}} (H_{\mu}(K))_\mu(K))
=(Z_\mu(K,p)-\tau^{t}_{\mathrm{ess}} (H_{\mu}(K))\hat
\Delta_\mu(K,p,\tau^{t}_{\mathrm{ess}} (H_{\mu}(K))
 \end{equation}
holds. Putting \eqref{Z} into \eqref{nondeger} yields the estimate
\begin{equation*}\label{otsenka2}
\Delta_\mu (K,p,\tau^{t}_{\mathrm{ess}} (H_{\mu}(K))\leq
M(K)|p-p_\mu(K)|^2.
\end{equation*}
Hence, we have
$$\int\limits_{\T^d} \frac{\eta(dp)
}{\Delta_\mu(K,p\,;\tau^{t}_{\mathrm{ess}} (H_{\mu}(K))}=+\infty.$$
Consequently, for any $P>0$ there exists $z_0>
\tau^{t}_{\mathrm{ess}} (H_{\mu}(K)) $ such that the inequality
\begin{equation}\label{norm}
||F_{z_0}||=\supp_{||\psi||=1}(F_{z_0}\psi,\psi)=[\int\limits_{\T^d}
\frac{\eta(dp)} {\Delta_\mu(K,p,z)}]^{\frac{1}{2}}>P
\end{equation} holds.
Since for all $z\geq \tau^{t}_{\mathrm{ess}}(H_{\mu}(K))$ the
positive function $(z-E_{\min})^{-1}$ is bounded above, the
inequality \eqref{norm} yields  the existence of $\psi \in
L^2(\T^d),\,||\psi||_{L^2(\T^d)}=1$ satisfying the inequality
$(\mathrm{L}_{\mu}(K,z_0)\psi,\psi)<-1$. At the same time
$$(\mathrm{L}_{\mu}(K,z)\psi,\psi)\rightarrow 0\,\, \mbox{as}\,\, z\rightarrow
+\infty.$$ Therefor there exists
$E_\mu(K)>z_0>\tau^{t}_{\mathrm{ess}}(H_{\mu}(K))$, such that
$$|(\mathrm{L}_{\mu}(K,E_\mu(K))\psi,\psi)|=1$$ and
hence the Hilbert-Schmidt theorem yields that the equation
\begin{equation}\label{ regular_solution}
\mathrm{L}_{\mu}(K,E_\mu(K))\psi=\psi
\end{equation}
has a solution $\psi \in L^2(\T^d),\,||\psi||=1$. Lemma
\ref{eigenvalue} yields that
$E_\mu(K)>\tau^{t}_{\mathrm{ess}}(H_{\mu}(K))$ is an eigenvalue of
the operator $H_{\mu}(K)$ and the associated eigenfunction
$f_{E_\mu(K)}(K;p,q)$ takes the form
\begin{equation}\label{eigenfunction}
f_{E_\mu(K)}(K;p,q)=\dfrac{\mu
c(K)[\varphi(p)-\varphi(q)]}{E_\mu(K)-E(K;p,q)}\in
L^{2,a}[({\T}^d)^2]
\end{equation}
with $c(K)= ||f_{E_\mu(K)}(K;p,q)||^{-1},\,K \in \T^d$ being a
normalizing constant.

Since for any $K\in\T^d$ the functions $\Delta_\mu(K,p\,;E_\mu(K))$
and $E_\mu(K)-E(K;p,q)>0$ are regular in $p,q \in \T^d$ the solution
$\psi$ of the equation \eqref{B-Sequation} and the function
$\varphi$ given in \eqref{Phi} are regular in $p\in \T^d$. Hence,
the eigenfunction \eqref{eigenfunction} of the operator $H_{\mu}(K)$
associated to the eigenvalue
$E_\mu(K)>\tau^{t}_{\mathrm{ess}}(H_{\mu}(K))$ is also regular in
$p,q\in \T^d$.

For any $z>\tau^{t}_{\mathrm{ess}}(H_{\mu}(K))$ the kernel function
\begin{equation}\label{kernel}
\mathrm{L}_{\mu}(K,z;p,q)=-\mu
\frac{\Delta^{-\frac{1}{2}}_\mu(K,p,z)
\Delta^{-\frac{1}{2}}_\mu(K,q, z)}{z-E(K;p,q)}
\end{equation}
of the compact self-adjoint operator $\mathrm{L}_{\mu}(K,z)$ is
regular in $p,q \in \T^d$. The Fredholm determinant
$D_\mu(K,z)=\det[I-\mathrm{L}_{\mu}(K,z)]$ associated to the kernel
function \ref{kernel} is real and regular in $z\in
(\tau^{t}_{\mathrm{ess}}(H_{\mu}(K)),+\infty)$. Lemma
\ref{eigenvalue} and the Fredholm theorem yield that each eigenvalue
of the operator $H_{\mu}(K)$ is a zero of the determinant
$D_\mu(K,z)$ and vice versa. Consequently, the compactness of the
torus $\T^d$ and the implicit function theorem give that the
eigenvalue $E_\mu(K)$ of $H_{\mu}(K), \mu>0$ is a regular function
in $K \in \T^d,\,d=1,2$.

Since for any $p,q\in\T^d$ the functions
$\Delta_\mu(K,p\:;E_\mu(K))$ and $E(K;p,q)-E_\mu(K)$ are regular in
$K\in \T^d$ the solution $\psi$ of \eqref{B-Sequation} and the
function $\varphi$ defined by \eqref{Phi} are regular in $K\in
\T^d$. Hence, the eigenfunction \eqref{eigenfunction} of the
operator $H_{\mu}(K)$ associated to the eigenvalue
$E_\mu(K)>\tau^{t}_{\mathrm{ess}}(H_{\mu}(K))$ is also regular in
$K\in \T^d$. Consequently, the vector valued mapping
\begin{equation*}\label{map}
f_{\mu}:\mathbb{T}^d \rightarrow
L^2[\mathbb{T}^d,\eta(dK);L^{2,a}[({\T}^d)^2]],\,K\rightarrow
f_{\mu,K}(\cdot,\cdot)
\end{equation*} is regular in $\mathbb{T}^d$.

Now we prove that the operator $H_{\mu}(K)$ has no eigenvalue lying
below the bottom of the essential spectrum for $\mu>0$ and $K\in
\T^d, d=1,2$.

The operator $H_{\mu}(K)$ acting in the Hilbert space
$L^{2,a}[({\T}^d)^2]$ is of the form
\begin{equation*}\label{TotalK}
(H_{\mu}(K)f)(p,q)=E(K;p,q)f(p,q)+\mu[\int\limits_{\T^d}f(p,t)
\eta(dt)+\int\limits_{\T^d}f(t,q)\eta(dt)]
\end{equation*}
where
\begin{equation*}E(K;p,q)=\varepsilon(p) + \varepsilon (q)+\gamma \varepsilon
(K-p-q).
\end{equation*}
Then
\begin{align*}\label{TotalK}
&(H_{\mu}(K)f,f)=\int\limits_{\T^d}E(K;p,q)|f(p,q)|^2\eta(dq)\eta(dp)\\
&+\mu\int\limits_{(\T^d)^2}[\int\limits_{\T^d}f(p,t)
\eta(dt)+\int\limits_{\T^d}f(t,q)\eta(dt)]\overline{f(p,q)}\eta(dq)\eta(dp).
\end{align*}
Fubini's theorem gives us the following relations
\begin{align*}
&(H_{\mu}(K)f,f)\\
&=\int\limits_{(\T^d)^2}E(K;p,q)|f(p,q)|^2\eta(dp)\eta(dq)\\
&+\mu\int\limits_{(\T^d)^2}[\int\limits_{\T^d}f(p,t)
\eta(dt)]\overline{f(p,q)}\eta(dq)\eta(dp)\\
&+\mu\int\limits_{(\T^d)^2}[\int\limits_{\T^d}f(t,q)
\eta(dt)]\overline{f(p,q)}\eta(dp)\eta(dq)\\
&=\int\limits_{(\T^d)^2}E(K;p,q)|f(p,q)|^2\eta(dp)\eta(dq)\\
&+\mu\int\limits_{\T^d}|\int\limits_{\T^d}f(p,t)\eta(dt)|^2\eta(dp)\\
&+\mu\int\limits_{\T^d}|\int\limits_{\T^d}f(t,q)\eta(dt)|^2\eta(dq)\\
&\geq\int\limits_{(\T^d)^2}E(K;p,q)|f(p,q)|^2\eta(dp)\eta(dq)\geq0.\\
\end{align*}
The min-max principle complites the proof.

Note that the case $\mu<0$ of Theorem \ref{existencethree} can be
proven in the same way as above\cite{DLKh2015}. $\square$

 {\bf Acknowledgement} This work was supported by the Grant
F4-FA-F079 of Fundamental Science Foundation of Uzbekistan.

\end{document}